\newtheorem{theorem}{Theorem}[section]
\newtheorem{proposition}[theorem]{Proposition}
\newtheorem{corollary}[theorem]{Corollary}
\newtheorem{definition}[theorem]{Definition}
\newtheorem{remark}[theorem]{Remark}
\newtheorem{example}[theorem]{Example}
\begin{document}
\setlength\arraycolsep{2pt}
\title{Nonparametric Estimation from Correlated Copies of a Drifted Process}
\author{Nicolas MARIE$^{\dag}$}
\email{nmarie@parisnanterre.fr}
\date{}
\maketitle
\noindent
$^{\dag}$Universit\'e Paris Nanterre, CNRS, Modal'X, 92001 Nanterre, France.
\maketitle
%

% Abstract.

%
\begin{abstract}
This paper presents several situations leading to the observation of multiple correlated copies of a drifted process, and then non-asymptotic risk bounds are established on nonparametric estimators of the drift function $b_0$ and its derivative. For drifted Gaussian processes with a regular enough covariance function, a sharper risk bound is established on the estimator of $b_0'$, and a model selection procedure is provided with theoretical guarantees.
\end{abstract}
\noindent
{\bf Keywords:} Correlated processes; Gaussian processes; Fractional Brownian motion; Functional Data; Nonparametric estimation; Drift estimation; Derivative estimation; Model selection.
\tableofcontents
%

% Section : Introduction.

%
\section{Introduction}\label{section_introduction}
Let $X = (X_t)_{t\in\mathbb R_+}$ be the process defined by
\begin{equation}\label{main_model}
X_t = b_0(t) + Z_t
\textrm{ $;$ }\forall t\in\mathbb R_+,
\end{equation}
where $b_0 :\mathbb R_+\rightarrow\mathbb R$ is an unknown continuous function, and $Z = (Z_t)_{t\in\mathbb R_+}$ is a centered continuous process such that $Z_0 = 0$ and $\mathbb E(Z_{t}^{2}) <\infty$ for every $t\in [0,T]$. Consider also $T > 0$, $N\in\mathbb N^* =\mathbb N\backslash\{0\}$ and the processes $X^1,\dots,X^N$ defined by
\begin{equation}\label{main_model_copies}
X_{t}^{i} = b_0(t) + Z_{t}^{i}
\textrm{ $;$ }
(i,t)\in\{1,\dots,N\}\times [0,T],
\end{equation}
where $Z^1,\dots,Z^N$ are $N$ copies of $Z$ such that, for every $i,k\in\{1,\dots,N\}$,
\begin{equation}\label{noise_covariance_condition}
\left|\int_{0}^{T}\mathbb E(Z_{s}^{i}Z_{s}^{k})ds\right|\leqslant \Gamma^{i,k}
\quad {\rm with}\quad
\Gamma^{i,k}\geqslant 0,
\quad {\rm and}\quad
\sum_{i,k = 1}^{N}\Gamma^{i,k}\underset{N\rightarrow\infty}{=}\textrm o(N^2).
\end{equation}
Note, and this is crucial, that Model (\ref{main_model_copies}) and the condition (\ref{noise_covariance_condition}) allow to consider dependent copies of the process $X$. Several dynamics and observation schemes lead to Model (\ref{main_model_copies}) for well-chosen $b_0$, $Z$ and $\Gamma = (\Gamma^{i,k})_{i,k}$. First, $N$ correlated copies $Y^1,\dots,Y^N$ of the solution $Y$ of a non-autonomous linear stochastic differential equation driven by a (fractional) Brownian motion - called a linear (fractional) diffusion in the sequel for the sake of simplicity - can be written as $Y^i = Y_0\exp(X^i)$, where the corresponding $b_0$, $Z$ and $\Gamma$ are provided in Section \ref{subsubsection_copies_linear_diffusion}. For instance, $Y^1,\dots,Y^N$ are appropriate to model the prices of $N$ interacting risky assets of the same kind as in Duellmann et al. \cite{DKK10}, or the elimination processes of a drug by $N$ patients involved in a clinical trial (see Donnet and Samson \cite{DS13}). In the same spirit, Section \ref{subsubsection_interacting_particle_system} provides a basic interacting particle system leading to Model (\ref{main_model_copies}). Now, in statistical inference for stochastic differential equations, at least two kinds of estimators of the drift function are investigated in the literature: those computed from a single long-time observation of the ergodic stationary solution (see Kutoyants \cite{KUTOYANTS04} and Kubilius et al. \cite{KMR17}), and those computed from multiple copies of the solution observed on a short-time interval (see Comte and Genon-Catalot \cite{CGC20}, Denis et al. \cite{DDM21}, Marie \cite{MARIE25}, etc.). In our simple Model (\ref{main_model}), when $Z$ the fractional Brownian motion, Section \ref{subsection_examples_observation_scheme} shows how to construct - with theoretical guarantees - correlated copies of $X_{|[0,T]}$ from one long-time observation of $X$. So, Section \ref{subsection_examples_observation_scheme} shows that a single long-time observation of $X$ leads to our Model (\ref{main_model_copies}) when $Z$ is a fractional Brownian motion, but note that the situations presented in Sections \ref{subsubsection_copies_linear_diffusion} and \ref{subsubsection_interacting_particle_system} - related to functional data-analysis - don't lead to a single long-time observation of $X$.
\\
\\
Beyond the aforementioned possible applications of Model (\ref{main_model_copies}), the estimation of $b_0$ and $b_0'$ (when $b_0\in C^1(\mathbb R_+;\mathbb R)$) - which is part of nonparametric functional data-analysis (see Ferraty and Vieu \cite{FV06}) - need to be investigated. On nonparametric estimators of $b_0$ computed from one long-time observation of $X$ when $Z$ is a Gaussian process, see Ibragimov and Rozanov \cite{IR78}, Chapter VII, or the more recent paper \cite{PR08} written by Privault and R\'eveillac, and on those computed from multiple independent copies of $X$ observed on a short-time - possibly random - interval, the reader can refer to Bunea et al. \cite{BIW11}, Kassi and Patilea \cite{KP25} and references therein. On kernel-based nonparametric estimators of $b_0'$ computed from multiple independent copies of $X$, see Ghale-Joogh and Hosseini-Nasab \cite{GJHN21} and references therein. On asymptotic results on projection estimators - in the B-spline basis - of all derivatives of $b_0$ in Model (\ref{main_model_copies}), the reader can refer to Cao \cite{CAO14}. Up to our knowledge, \cite{CAO14} is the only reference on a nonparametric estimator of $b_0'$ in Model (\ref{main_model_copies}) driven by correlated signals.\\
In our paper, a simple estimator of $b_0$ (on $[0,T]$) is given by $\widehat b =\overline X :=\frac{1}{N}\sum_{i = 1}^{N}X^i$ and reaches the parametric rate $\mathfrak R_N =\frac{1}{N^2}\sum_{i,k = 1}^{N}\Gamma^{i,k}$ (see Section \ref{subsection_estimator_b_0}). However, having in mind the estimation of the drift function of a non-autonomous linear (fractional) diffusion process, a suitable estimator of $b_0'$ (on $[0,T]$) is also required. So, now, assume that $b_0\in C^1(\mathbb R_+;\mathbb R)$, and that the paths of $Z$ are locally $\alpha$-H\"older continuous with $\alpha\in (0,1)$. Moreover, consider $\mathcal S_m = {\rm span}\{\varphi_1,\dots,\varphi_m\}$, where $m\in\{1,\dots,N\}$, $\varphi_1,\dots,\varphi_N$ are continuously differentiable functions from $[0,T]$ into $\mathbb R$, and $(\varphi_1,\dots,\varphi_N)$ is an orthonormal family of $\mathbb L^2([0,T],dt)$. In the sequel, the orthogonal projection from $\mathbb L^2([0,T],dt)$ onto $\mathcal S_m$ is denoted by $\Pi_m(\cdot)$. Since $\widehat b$ is an estimator of $b_0$ reaching the parametric rate $\mathfrak R_N$, so is $\widehat b_m =\Pi_m(\overline X)$ for $m$ large enough, and since
\begin{displaymath}
\widehat b_m =
\sum_{j = 1}^{m}\left(\int_{0}^{T}\varphi_j(s)d\mathbb X_s\right)\varphi_j
\quad {\rm with}\quad
\mathbb X =\int_{0}^{.}\overline X_sds,
\end{displaymath}
a natural estimator of $b_0'$ is given by
\begin{displaymath}
\widehat b_m' =\sum_{j = 1}^{m}
\left(\int_{0}^{T}\varphi_j(s)d\overline X_s\right)\varphi_j,
\end{displaymath}
where the integral with respect to $\overline X$ is taken in the sense of Young. Note that the paths of $\overline X$ are $\alpha$-H\"older continuous from $[0,T]$ into $\mathbb R$ because $b_0\in C^1(\mathbb R_+;\mathbb R)$ and the paths of the $Z^i$'s are locally $\alpha$-H\"older continuous, and since the $\varphi_j$'s are continuously differentiable from $[0,T]$ into $\mathbb R$, the Young integral $\int_{[0,T]}\varphi_jd\overline X(\omega)$ exists for every $j\in\{1,\dots,m\}$ and $\omega\in\Omega$ (see Section \ref{section_Young_integral}). In Section \ref{subsection_estimator_b_0'}, a non-asymptotic risk bound is established on $\widehat b_m'$, and then improved when $Z$ is a Gaussian process with a regular enough covariance function. A model selection procedure is provided with theoretical guarantees on the corresponding adaptive estimator. In Section \ref{section_estimation_examples}, the results established on $\widehat b$ and $\widehat b_m'$ in Sections \ref{subsection_estimator_b_0} and \ref{subsection_estimator_b_0'} are applied in the models of Section \ref{subsection_examples_dynamics}.
\\
\\
The outline of the paper is as follows: Section \ref{section_Young_integral} provides preliminary results on the Young integral, Section \ref{section_examples} deals with some dynamics and observation schemes leading to Model (\ref{main_model_copies}), the aforementioned theoretical results on $\widehat b$ and $\widehat b_m'$ are established in Section \ref{section_nonparametric_estimators}, and Section \ref{section_numerical_experiments} shows that both $\widehat b$ and $\widehat b_m'$ are also satisfactory on the numerical side.
\\
\\
{\bf Notations:}
\begin{enumerate}
 \item For $E =\mathbb N$, $\mathbb R$ or $\mathbb R_+$, $E^* = E\backslash\{0\}$.
 \item The $\mathbb L^1$-norm (resp. $\mathbb L^2$-norm) is denoted by $\|.\|_1$ (resp. $\|.\|$), the uniform norm is denoted by $\|.\|_{\infty}$, and the $\alpha$-H\"older seminorm on $[0,T]$ is denoted by $\|.\|_{\alpha,T}$.
 \item Let $I\subset\mathbb R$ be an interval.
 \begin{itemize}
  \item $C^0(I;\mathbb R)$ is the space of the continuous functions from $I$ into $\mathbb R$.
  \item $C^1(I;\mathbb R)$ is the space of the continuously differentiable functions from $I$ into $\mathbb R$.
 \end{itemize}
\end{enumerate}
%

% Section : Preliminaries on the Young integral.

%
\section{Preliminaries on the Young integral}\label{section_Young_integral}
First, assume that the paths of the process $Z$ are locally $\alpha$-H\"older continuous, and let us define the pathwise integral with respect to $Z$ thanks to the Young integral.
%

% Definition : Riemann's sums.

%
\begin{definition}\label{Riemann_sums}
Consider $h,x\in C^0([0,T];\mathbb R)$, and let $D =\{t_0,\dots,t_n\}$ be a dissection of $[s,t]$ with $n\in\mathbb N^*$ and $s,t\in [0,T]$ satisfying $s < t$. The Riemann sum of $h$ with respect to $x$ along the dissection $D$ of $[s,t]$ is defined by
\begin{displaymath}
J_{h,x,D}(s,t) =
\sum_{i = 0}^{n - 1}h(t_i)(x(t_{i + 1}) - x(t_i)).
\end{displaymath}
\end{definition}
\noindent
{\bf Notation.} Let $D =\{t_0,\dots,t_n\}$ be a dissection of $[s,t]$ with $n\in\mathbb N^*$ and $s,t\in [0,T]$ satisfying $s < t$. Its mesh is denoted by $\pi(D)$:
\begin{displaymath}
\pi(D) =
\max_{i\in\{0,\dots,m - 1\}}
\{t_{i + 1} - t_i\}.
\end{displaymath}
%

% Theorem : Young's integral.

%
\begin{theorem}\label{Young_integral}\index{Young's integral}
Consider $\beta\in (0,1]$ such that $\alpha +\beta > 1$, and let $h$ (resp. $x$) be a $\beta$-H\"older (resp. $\alpha$-H\"older) continuous function from $[0,T]$ into $\mathbb R$. Then, there exists a unique $\alpha$-H\"older continuous function $J_{h,x} : [0,T]\rightarrow\mathbb R$ such that, for any $s,t\in [0,T]$ satisfying $s < t$, and every sequence $(D_n)_{n\in\mathbb N}$ of dissections of $[s,t]$ satisfying $\pi(D_n)\rightarrow 0$ when $n\rightarrow\infty$,
\begin{displaymath}
\lim_{n\rightarrow\infty}
|J_{h,x}(t) - J_{h,x}(s) - J_{h,x,D_n}(s,t)| = 0.
\end{displaymath}
The Young integral on $[s,t]$ of $h$ with respect to $x$ is defined by
\begin{displaymath}
\int_{s}^{t}h(u)dx(u) =
J_{h,x}(t) - J_{h,x}(s).
\end{displaymath}
\end{theorem}
\noindent
See Friz and Victoir \cite{FV10}, Theorem 6.8 for a proof.
\\
\\
By Theorem \ref{Young_integral}, for every process $H$ which paths are locally $\beta$-H\"older continuous with $\beta\in (0,1]$ satisfying $\alpha +\beta > 1$, one may define the pathwise integral on $[0,T]$ of $H$ with respect to $Z$ by
\begin{displaymath}
\left(\int_{0}^{T}H_sdZ_s\right)(\omega) =
\int_{0}^{T}H_s(\omega)dZ_s(\omega)
\textrm{ $;$ }
\forall\omega\in\Omega.
\end{displaymath}
In particular, for every $h\in C^1([0,T];\mathbb R)$, since $\alpha + 1 > 1$,
\begin{displaymath}
\left(\int_{0}^{T}h(s)dZ_s\right)(\omega) =
\int_{0}^{T}h(s)dZ_s(\omega)
\textrm{ $;$ }
\forall\omega\in\Omega.
\end{displaymath}
Now, assume that $Z$ is a Gaussian process such that $\mathbb E(\|Z\|_{\alpha,T}^{2}) <\infty$, and which covariance function is denoted by $R$. For instance, if $Z$ is a fractional Brownian motion of Hurst parameter $H\in (0,1)$, then the paths of $Z$ are locally $\beta$-H\"older continuous and $\mathbb E(\|Z\|_{\beta,T}^{2}) <\infty$ for every $\beta\in (0,H)$ by the Garsia-Rodemich-Rumsey lemma (see Nualart \cite{NUALART06}, Lemma A.3.1 and - initially - Garsia et al. \cite{GRR70}). Assume also that $(Z^1,\dots,Z^N)$ is a Gaussian process whose components are copies of $Z$, and that there exists a correlation matrix $\Gamma_{\star} = (\Gamma_{\star}^{i,k})_{i,k}$ such that, for every $i,k\in\{1,\dots,N\}$ and $s,t\in [0,T]$,
\begin{equation}\label{noise_covariance_condition_Gaussian_preliminaries}
\mathbb E(Z_{s}^{i}Z_{t}^{k}) =\Gamma_{\star}^{i,k}R(s,t).
\end{equation}
%

% Definition : 2D Riemann's sums.

%
\begin{definition}\label{2D_Riemann_sums}
Consider $\Lambda,\Theta\in C^0([0,T]^2;\mathbb R)$, and let $D =\{t_0,\dots,t_n\}$ be a dissection of $[0,T]$ with $n\in\mathbb N^*$. The Riemann sum of $\Lambda$ with respect to $\Theta$ along $D^2$ is defined by
\begin{displaymath}
\mathbb J_{\Lambda,\Theta,D^2} =
\sum_{j,\ell = 0}^{n - 1}\Lambda(t_j,t_{\ell})\Delta_{(t_j,t_{j + 1}),(t_{\ell},t_{\ell + 1})}\Theta,
\end{displaymath}
where
\begin{displaymath}
\Delta_{(s,t),(u,v)}\Theta =
\Theta(t,v) -\Theta(s,v) - (\Theta(t,u) -\Theta(s,u))
\textrm{ $;$ }\forall s,t,u,v\in [0,T].
\end{displaymath}
\end{definition}
%

% Proposition : Isometry property of the Young-Wiener integral.

%
\begin{proposition}\label{Young_Wiener_isometry}
Consider $\varphi,\psi\in C^1([0,T];\mathbb R)$.
\begin{enumerate}
 \item For every sequence $(D_n)_{n\in\mathbb N}$ of dissections of $[0,T]$ satisfying $\pi(D_n)\rightarrow 0$ when $n\rightarrow\infty$, the real sequence $(\mathbb J_{\varphi\otimes\psi,R,D_{n}^{2}})_{n\in\mathbb N}$ converges, and its limit is denoted by
 \begin{displaymath}
 \int_{[0,T]^2}\varphi(s)\psi(t)dR(s,t).
 \end{displaymath}
 \item The Young integral on $[0,T]$ of $\varphi$ (and $\psi$) with respect to $Z$ is a Gaussian random variable.
 \item For every $i,k\in\{1,\dots,N\}$,
 \begin{displaymath}
 \mathbb E\left[\left(\int_{0}^{T}\varphi(s)dZ_{s}^{i}\right)
 \left(\int_{0}^{T}\psi(s)dZ_{s}^{k}\right)\right] =
 \Gamma_{\star}^{i,k}\int_{[0,T]^2}\varphi(s)\psi(t)dR(s,t).
 \end{displaymath}
\end{enumerate}
\end{proposition}
%

% Proof.

%
\begin{proof}
First, since $\mathbb E(\|Z\|_{\alpha,T}^{2}) <\infty$, for every $s,t,u,v\in [0,T]$,
\begin{eqnarray*}
 |\Delta_{(s,t),(u,v)}R| & \leqslant &
 \mathbb E(|Z_t - Z_s|\cdot|Z_v - Z_u|)\\
 & \leqslant &
 \mathbb E(\|Z\|_{\alpha,T}^{2})|t - s|^{\alpha}|v - u|^{\alpha},
\end{eqnarray*}
and then $R$ is a continuous function of finite 2D $1/\alpha$-variation from $[0,T]^2$ into $\mathbb R$. Since the functions $\varphi$ and $\psi$ are continuously differentiable - and then Lipschitz continuous - from $[0,T]$ into $\mathbb R$, Proposition \ref{Young_Wiener_isometry}.(1) is a straightforward consequence of Friz and Victoir \cite{FV10}, Theorem 6.18. Now, by Friz and Victoir \cite{FV10}, Proposition 15.39, the Young integral on $[0,T]$ of $\varphi$ (and $\psi$) with respect to $Z$ is a Gaussian random variable, and
\begin{equation}\label{Young_Wiener_isometry_1}
\mathbb E\left[\left(\int_{0}^{T}\varphi(s)dZ_s\right)
\left(\int_{0}^{T}\psi(s)dZ_s\right)\right] =
\int_{[0,T]^2}\varphi(s)\psi(t)dR(s,t).
\end{equation}
Finally, for any $i,k\in\{1,\dots,N\}$, thanks to the integration by parts formula, and by Equality (\ref{noise_covariance_condition_Gaussian_preliminaries}),
\begin{eqnarray*}
 \mathbb E\left[\left(\int_{0}^{T}\varphi(s)dZ_{s}^{i}\right)
 \left(\int_{0}^{T}\psi(s)dZ_{s}^{k}\right)\right] & = &
 \mathbb E\left[\left(\varphi(T)Z_{T}^{i} -\int_{0}^{T}\varphi'(s)Z_{s}^{i}ds\right)
 \left(\psi(T)Z_{T}^{k} -\int_{0}^{T}\psi'(s)Z_{s}^{k}ds\right)\right]\\
 & = &
 \Gamma_{\star}^{i,k}
 \left(\varphi(T)\psi(T)R(T,T) -\varphi(T)\int_{0}^{T}\psi'(s)R(s,T)ds\right.\\
 & &
 \hspace{0.75cm}\left.
 -\psi(T)\int_{0}^{T}\varphi'(s)R(s,T)ds +
 \int_{[0,T]^2}\varphi'(s)\psi'(s)R(s,t)dsdt\right)\\
 & = &
 \Gamma_{\star}^{i,k}
 \mathbb E\left[\left(\int_{0}^{T}\varphi(s)dZ_s\right)
 \left(\int_{0}^{T}\psi(s)dZ_s\right)\right],
\end{eqnarray*}
leading to
\begin{displaymath}
\mathbb E\left[\left(\int_{0}^{T}\varphi(s)dZ_{s}^{i}\right)
\left(\int_{0}^{T}\psi(s)dZ_{s}^{k}\right)\right] =
\Gamma_{\star}^{i,k}\int_{[0,T]^2}\varphi(s)\psi(t)dR(s,t)
\quad\textrm{by Equality (\ref{Young_Wiener_isometry_1})}.
\end{displaymath}
\end{proof}
%

% Section : Some dynamics and observation schemes leading to our main model.

%
\section{Some dynamics and observation schemes leading to Model (\ref{main_model_copies})}\label{section_examples}
%

% Subsection : From (fractional) diffusions and interacting particle systems to our main model.

%
\subsection{From (fractional) diffusions and interacting particle systems to Model (\ref{main_model_copies})}\label{subsection_examples_dynamics}
This section shows how to construct $X^1,\dots,X^N$ from copies of a linear (fractional) diffusion, and from a basic interacting particle system. Moreover, possible applications in finance and in pharmacokinetics are mentioned.
%

% Subsubsection : Copies of a linear (fractional) diffusion.

%
\subsubsection{Copies of a linear (fractional) diffusion}\label{subsubsection_copies_linear_diffusion}
First, for any $i\in\{1,\dots,N\}$, consider the process $S^i$ defined by the (It\^o) stochastic differential equation
\begin{equation}\label{linear_SDE_correlated_noises}
S_{t}^{i} = S_0 +\int_{0}^{t}\texttt b_0(u)S_{u}^{i}du +
\sigma\int_{0}^{t}S_{u}^{i}{\rm d}W_{u}^{i}
\textrm{ $;$ }t\in [0,T],
\end{equation}
where $S_0\in\mathbb R$, $\sigma\in\mathbb R^*$, $\texttt b_0\in C^0(\mathbb R_+;\mathbb R)$, $W^1,\dots,W^N$ are Brownian motions such that $d\langle W^i,W^k\rangle_t = R^{i,k}dt$ for every $i,k\in\{1,\dots,N\}$, and $R = (R^{i,k})_{i,k}$ is a correlation matrix. For instance, (\ref{linear_SDE_correlated_noises}) is appropriate to model the prices of $N$ interacting risky assets of the same kind as in Duellmann et al. \cite{DKK10}. For every $t\in [0,T]$, thanks to the It\^o formula,
\begin{equation}\label{linear_SDE_correlated_noises_transformed}
S_{t}^{i} = S_0\exp(X_{t}^{i}),\quad
X_{t}^{i} = b_0(t) + Z_{t}^{i},\quad
b_0(t) =\int_{0}^{t}\left(\texttt b_0(u) -\frac{\sigma^2}{2}\right)du
\quad {\rm and}\quad
Z_{t}^{i} =\sigma W_{t}^{i}.
\end{equation}
Moreover, $Z^1,\dots,Z^N$ are centered continuous processes such that, by the stochastic integration by parts formula,
\begin{displaymath}
T\sup_{t\in [0,T]}|\mathbb E(Z_{t}^{i}Z_{t}^{k})| =
\sigma^2T\sup_{t\in [0,T]}|\langle W^i,W^k\rangle_t| =
(\sigma T)^2|R^{i,k}|,
\end{displaymath}
leading to
\begin{displaymath}
\left|\int_{0}^{T}\mathbb E(Z_{s}^{i}Z_{s}^{k})ds\right|
\leqslant
(\sigma T)^2|R^{i,k}|.
\end{displaymath}
Thus, $Z^1,\dots,Z^N$ satisfy the condition (\ref{noise_covariance_condition}) with
\begin{displaymath}
\Gamma^{i,k} = (\sigma T)^2|R^{i,k}|
\textrm{ $;$ }i,k\in\{1,\dots,N\}.
\end{displaymath}
Now, for any $i\in\{1,\dots,N\}$, consider the process $C^i$ defined by the pathwise (Young) differential equation
\begin{equation}\label{linear_fDE_random_effect}
C_{t}^{i} = C_0 +\int_{0}^{t}(\texttt b_0(s) +\phi^i)C_{s}^{i}ds +
\sigma\int_{0}^{t}C_{s}^{i}dB_{s}^{i}
\textrm{ $;$ }t\in [0,T],
\end{equation}
where $C_0\in\mathbb R$, $\sigma\in\mathbb R^*$, $\texttt b_0\in C^0(\mathbb R_+;\mathbb R)$, $B^1,\dots,B^N$ are independent fractional Brownian motions of Hurst parameter $H\in (1/2,1)$, and $\phi^1,\dots,\phi^N$ are i.i.d. centered square-integrable random variables of common variance $\sigma_{\phi}^{2} > 0$ such that $(\phi^1,\dots,\phi^N)$ and $(B^1,\dots,B^N)$ are independent. Such a model with random effects is commonly used in population pharmacokinetics (see Donnet and Samson \cite{DS13}), but usually $H = 1/2$ and the stochastic integral in Equation (\ref{linear_fDE_random_effect}) is taken in the sense of It\^o. However, to take $H$ close to $1$ ensures that the paths of $C^i$ are less irregular than those of a diffusion process, which is appropriate to model the elimination process of a drug by the $i$-th patient of a clinical trial. For every $t\in [0,T]$, thanks to the change of variable formula for Young's integral,
\begin{equation}\label{linear_fDE_random_effect_transformed}
C_{t}^{i} = C_0\exp(X_{t}^{i}),\quad
X_{t}^{i} = b_0(t) + Z_{t}^{i},\quad
b_0(t) =\int_{0}^{t}\texttt b_0(s)ds
\quad {\rm and}\quad
Z_{t}^{i} =\phi^it +\sigma B_{t}^{i}.
\end{equation}
Moreover, $Z^1,\dots,Z^N$ are i.i.d. centered continuous processes such that
\begin{displaymath}
T\sup_{t\in [0,T]}\mathbb E(|Z_{t}^{i}|^2) =
T(\sigma_{\phi}^{2}T^2 +\sigma^2T^{2H}) =:\Sigma^2.
\end{displaymath}
Thus, $Z^1,\dots,Z^N$ satisfy the condition (\ref{noise_covariance_condition}) with
\begin{displaymath}
\Gamma^{i,k} =
\Sigma^2\mathbf 1_{i = k}
\textrm{ $;$ }i,k\in\{1,\dots,N\}.
\end{displaymath}
%

% Subsubsection : A basic interacting particle system.

%
\subsubsection{A basic interacting particle system}\label{subsubsection_interacting_particle_system}
For any $i\in\{1,\dots,N\}$, consider the process $Y^i$ defined by
\begin{equation}\label{interacting_particle_system}
Y_{t}^{i} = Y_0 +\int_{0}^{t}(\texttt b_0'(s) - (Y_{s}^{i} -\overline Y_s))ds +
\sigma W_{t}^{i}
\textrm{ $;$ }t\in [0,T],
\end{equation}
where $Y_0\in\mathbb R$, $\sigma\in\mathbb R^*$, $\texttt b_0\in C^1(\mathbb R_+;\mathbb R)$ with $\texttt b_0(0) = 0$, and $W^1,\dots,W^N$ are independent Brownian motions. Moreover, let $X^i$ be the process defined by
\begin{equation}\label{interacting_particle_system_transformed}
X_{t}^{i} = Y_{t}^{i} +\int_{0}^{t}Y_{s}^{i}ds - Y_0(1 + t)
\textrm{ $;$ }t\in [0,T].
\end{equation}
For every $t\in [0,T]$,
\begin{displaymath}
\overline Y_t = Y_0 +\texttt b_0(t) +\frac{\sigma}{N}\sum_{\ell = 1}^{N}W_{t}^{\ell},
\end{displaymath}
leading to
\begin{eqnarray*}
 X_{t}^{i} & = &
 Y_0 - Y_0(1 + t) +
 \int_{0}^{t}(\texttt b_0'(s) - (Y_{s}^{i} -\overline Y_s))ds +
 \int_{0}^{t}Y_{s}^{i}ds +\sigma W_{t}^{i}\\
 & = &
 -Y_0t +\texttt b_0(t) +\int_{0}^{t}\left(
 Y_0 +\texttt b_0(t) +\frac{\sigma}{N}\sum_{\ell = 1}^{N}W_{t}^{\ell}\right)ds +
 \sigma W_{t}^{i} = b_0(t) + Z_{t}^{i}
\end{eqnarray*}
with
\begin{displaymath}
b_0(t) =\texttt b_0(t) +\int_{0}^{t}\texttt b_0(s)ds
\quad {\rm and}\quad
Z_{t}^{i} =
\sigma\left(W_{t}^{i} +\frac{1}{N}\sum_{\ell = 1}^{N}\int_{0}^{t}W_{s}^{\ell}ds\right).
\end{displaymath}
The following proposition provides a matrix $\overline\Gamma = (\overline\Gamma^{i,k})_{i,k}$ satisfying the condition (\ref{noise_covariance_condition}) for $Z^1,\dots,Z^N$.
%

% Proposition : The matrix \Gamma for an interacting particle system.

%
\begin{proposition}\label{matrix_Gamma_interacting_particle_system}
For every $i,k\in\{1,\dots,N\}$,
\begin{displaymath}
T\sup_{t\in [0,T]}|\mathbb E(Z_{t}^{i}Z_{t}^{k})|
\leqslant
\overline\Gamma^{i,k},
\end{displaymath}
where
\begin{displaymath}
\overline\Gamma^{i,k} =
T(1\vee T)^3\sigma^2\left(\mathbf 1_{i = k} +\frac{3}{N}\right).
\end{displaymath}
\end{proposition}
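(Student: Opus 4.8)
The plan is to compute $\mathbb E(Z_t^iZ_t^k)$ in closed form and then derive the stated bound by a crude majorization of the resulting polynomial in $t$. Writing $I_t^\ell:=\int_0^tW_s^\ell\,ds$, the definition $Z_t^i=\sigma\bigl(W_t^i+N^{-1}\sum_{\ell=1}^NI_t^\ell\bigr)$ gives, upon expanding the product $Z_t^iZ_t^k$, four kinds of expectations: the covariance $\mathbb E(W_t^iW_t^k)$, the two cross terms of the form $\mathbb E(W_t^iI_t^m)$ and $\mathbb E(I_t^\ell W_t^k)$, and the double sum $\mathbb E(I_t^\ell I_t^m)$.

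First I would evaluate each of these using the independence of $W^1,\dots,W^N$, which forces every expectation involving two distinct indices to vanish and reduces $\mathbb E(Z_t^iZ_t^k)$ to its diagonal contributions. Concretely, $\mathbb E(W_t^iW_t^k)=t\mathbf 1_{i=k}$; by Fubini and $\mathbb E(W_t^\ell W_s^\ell)=\min(s,t)$ one obtains $\mathbb E(W_t^iI_t^m)=\tfrac{t^2}{2}\mathbf 1_{i=m}$ (and symmetrically for the other cross term, using $\int_0^t\min(s,t)\,ds=\int_0^ts\,ds$); and likewise $\mathbb E(I_t^\ell I_t^m)=\mathbf 1_{\ell=m}\int_0^t\int_0^t\min(s,u)\,ds\,du=\tfrac{t^3}{3}\mathbf 1_{\ell=m}$. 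In the sums only the terms with $\ell=m=i$ or $\ell=m=k$ survive, and collecting them yields the exact identity $\mathbb E(Z_t^iZ_t^k)=\sigma^2\bigl(t\mathbf 1_{i=k}+\tfrac{t^2}{N}+\tfrac{t^3}{3N}\bigr)$.

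The last step is the majorization. Since the right-hand side is nonnegative and nondecreasing in $t\geqslant 0$, its supremum over $[0,T]$ is attained at $t=T$, so $T\sup_{t\in[0,T]}|\mathbb E(Z_t^iZ_t^k)|=\sigma^2\bigl(T^2\mathbf 1_{i=k}+T^3/N+T^4/(3N)\bigr)$. Setting $\tau:=1\vee T\geqslant 1$ and using $T\leqslant\tau$, one bounds the diagonal part by $T^2\leqslant T\tau^3$ and the interaction part by $T^3+\tfrac{T^4}{3}\leqslant(1+\tfrac13)T\tau^3\leqslant 3T\tau^3$; together these give $T\sup_{t}|\mathbb E(Z_t^iZ_t^k)|\leqslant T\tau^3\sigma^2\bigl(\mathbf 1_{i=k}+3/N\bigr)=\overline\Gamma^{i,k}$.

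There is no genuine obstacle here: the computation is elementary, and the only points requiring care are the bookkeeping of which index pairs survive after invoking independence, and the two Fubini interchanges needed to reduce $\mathbb E(W_t^iI_t^m)$ and $\mathbb E(I_t^\ell I_t^m)$ to the iterated integrals $\int_0^t s\,ds$ and $\int_0^t\int_0^t\min(s,u)\,ds\,du$. The slack in the final constants (the factor $3$ and the crude power $(1\vee T)^3$) is deliberately generous, so matching the exact polynomial in $T$ to the stated form $\overline\Gamma^{i,k}$ is immediate.
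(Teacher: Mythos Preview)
Your proof is correct and follows essentially the same approach as the paper's: expand $\mathbb E(Z_t^iZ_t^k)$ using independence of the $W^\ell$'s to obtain $\sigma^2\bigl(t\mathbf 1_{i=k}+\tfrac{t^2}{N}+\tfrac{t^3}{3N}\bigr)$, then crudely bound each monomial by $(1\vee T)^3$. The only cosmetic difference is that the paper leaves the cross terms as $\tfrac{2}{N}\int_0^t(s\wedge t)\,ds$ and $\tfrac{1}{N}\int_0^t\int_0^t(s\wedge s')\,ds\,ds'$ without evaluating them, and then bounds these three $1/N$-contributions directly by $(1\vee T)^3$ each, whereas you first compute them as $t^2/2$ and $t^3/3$.
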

%

% Proof.

%
\begin{proof}
Consider $i,k\in\{1,\dots,N\}$ and $t\in [0,T]$. Since $W^1,\dots,W^N$ are independent Brownian motions,
\begin{eqnarray*}
 \frac{1}{\sigma^2}\mathbb E(Z_{t}^{i}Z_{t}^{k}) & = &
 \mathbb E\left[
 \left(W_{t}^{i} +\frac{1}{N}\sum_{\ell = 1}^{N}\int_{0}^{t}W_{s}^{\ell}ds\right)
 \left(W_{t}^{k} +\frac{1}{N}\sum_{\ell = 1}^{N}\int_{0}^{t}W_{s}^{\ell}ds\right)\right]\\
 & = &
 \mathbb E(W_{t}^{i}W_{t}^{k}) +
 \frac{1}{N}\sum_{\ell = 1}^{N}\int_{0}^{t}\mathbb E(W_{t}^{k}W_{s}^{\ell})ds\\
 & &
 \hspace{3cm} +
 \frac{1}{N}\sum_{\ell = 1}^{N}\int_{0}^{t}\mathbb E(W_{t}^{i}W_{s}^{\ell})ds +
 \frac{1}{N^2}\sum_{\ell,\ell' = 1}^{N}\int_{[0,t]^2}
 \mathbb E(W_{s}^{\ell}W_{s'}^{\ell'})dsds'\\
 & = &
 t\mathbf 1_{i = k} +
 \frac{2}{N}\int_{0}^{t}(s\wedge t)ds +
 \frac{1}{N}\int_{0}^{t}\int_{0}^{t}(s\wedge s')dsds'.
\end{eqnarray*}
Therefore,
\begin{displaymath}
\sup_{t\in [0,T]}|\mathbb E(Z_{t}^{i}Z_{t}^{k})|
\leqslant
(1\vee T)^3\sigma^2\left(\mathbf 1_{i = k} +\frac{3}{N}\right).
\end{displaymath}
\end{proof}
%

% Subsection : From long-time observation to correlated copies.

%
\subsection{From long-time observation to correlated copies}\label{subsection_examples_observation_scheme}
In this section, $Z$ is a fractional Brownian motion of Hurst parameter $H\in (0,1)$, and a single observation of $X$ on $\mathbb R_+$ is available. For every $i\in\{1,\dots,N\}$, consider $t_i(\Delta) = (i - 1)(T +\Delta)$ with $\Delta > 0$, and
\begin{equation}\label{long_time_observation_copies_fractional_model}
X_{t}^{i} = X_{t_i(\Delta) + t} - X_{t_i(\Delta)}
\textrm{ $;$ }t\in [0,T].
\end{equation}
Assume that $b_0$ is a $(T +\Delta)$-periodic function such that $b_0(0) = 0$. Then, for every $t\in [0,T]$,
\begin{eqnarray*}
 X_{t}^{i} & = &
 b_0(t_i(\Delta) + t) - b_0(t_i(\Delta)) + Z_{t_i(\Delta) + t} - Z_{t_i(\Delta)}\\
 & = &
 b_0(t) + Z_{t}^{i}
 \quad {\rm with}\quad
 Z_{t}^{i} =
 Z_{t_i(\Delta) + t} - Z_{t_i(\Delta)}.
\end{eqnarray*}
Since the fractional Brownian motion has stationary increments, $Z^1,\dots,Z^N$ are correlated copies of $Z$, and then $X^1,\dots,X^N$ are correlated copies of $X$. First, the following proposition provides a matrix $(\Gamma^{i,k}(H,\Delta))_{i,k}$ satisfying the condition (\ref{noise_covariance_condition}) for these copies $Z^1,\dots,Z^N$ of $Z$.
%

% Proposition : The matrix \Gamma for a long-time observation of the drifted fBm.

%
\begin{proposition}\label{matrix_Gamma_long_time_drifted_fBm}
Assume that $\Delta\geqslant T$. For every $i,k\in\{1,\dots,N\}$,
\begin{displaymath}
T\sup_{s\in [0,T]}|\mathbb E(Z_{s}^{i}Z_{s}^{k})|
\leqslant
\Gamma^{i,k}(H,\Delta),
\end{displaymath}
where
\begin{displaymath}
\Gamma^{i,k}(H,\Delta) =
\mathfrak c_{\ref{matrix_Gamma_long_time_drifted_fBm},1}(H)\mathbf 1_{i = k} +
\mathfrak c_{\ref{matrix_Gamma_long_time_drifted_fBm},2}(H)|k - i|^{2H - 2}\Delta^{2H - 2}\mathbf 1_{i\neq k}
\end{displaymath}
with
\begin{displaymath}
\mathfrak c_{\ref{matrix_Gamma_long_time_drifted_fBm},1}(H) =
T^{2H + 1}
\quad\textrm{and}\quad
\mathfrak c_{\ref{matrix_Gamma_long_time_drifted_fBm},2}(H) =
4H|2H - 1|T^3.
\end{displaymath}
\end{proposition}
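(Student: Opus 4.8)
The plan is to start from the covariance function of the fractional Brownian motion, namely $\mathbb E(Z_uZ_v) =\frac12(|u|^{2H} + |v|^{2H} - |u-v|^{2H})$, and to compute the covariance of the two increments $Z_s^i = Z_{t_i(\Delta)+s} - Z_{t_i(\Delta)}$ and $Z_s^k = Z_{t_k(\Delta)+s} - Z_{t_k(\Delta)}$ directly. Writing $\tau := t_i(\Delta) - t_k(\Delta) = (i-k)(T+\Delta)$ and expanding the four resulting covariance terms, all the purely temporal contributions ($a^{2H}$, $b^{2H}$, etc.) cancel and the cross terms in $s$ inside the $|\cdot|^{2H}$ combine, leaving
\[
\mathbb E(Z_s^iZ_s^k) =\frac12\bigl(|\tau + s|^{2H} + |\tau - s|^{2H} - 2|\tau|^{2H}\bigr).
\]
The diagonal case $i=k$ then gives $\tau = 0$, hence $\mathbb E(Z_s^iZ_s^i) = s^{2H}$, whose supremum over $[0,T]$ is $T^{2H}$; multiplying by $T$ produces exactly the announced constant $T^{2H+1}$, in fact with equality.

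For $i\neq k$ the idea is to exploit the spacing enforced by $\Delta\geqslant T$. Since $|i-k|\geqslant 1$ one has $|\tau|\geqslant T+\Delta\geqslant 2T\geqslant 2s$, so $\tau+s$ and $\tau-s$ both carry the sign of $\tau$, and by the symmetry $\tau\mapsto-\tau$ I may assume $\tau>0$. Setting $f(x) = x^{2H}$, the bracket above is the symmetric second difference $f(\tau+s)+f(\tau-s)-2f(\tau)$, which I would rewrite, using $f(0)$-centred integration, as
\[
f(\tau+s)+f(\tau-s)-2f(\tau) =\int_0^s\int_{\tau-u}^{\tau+u}f''(w)\,dw\,du,
\qquad f''(w) = 2H(2H-1)w^{2H-2}.
\]
Because $2H-2<0$, the map $w\mapsto w^{2H-2}$ is decreasing, and on the integration range one has $w\geqslant\tau-u\geqslant\tau-T\geqslant\tau/2$, so $f''$ is bounded in modulus by $2H|2H-1|(\tau/2)^{2H-2} = 2H|2H-1|\,2^{2-2H}\tau^{2H-2}$. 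Performing the inner integral (length $2u$) and the outer one ($\int_0^s2u\,du = s^2$) yields $|\mathbb E(Z_s^iZ_s^k)|\leqslant H|2H-1|\,2^{2-2H}\,s^2\,\tau^{2H-2}$.

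It then remains to match the constants. Using $T+\Delta\geqslant\Delta$ together with the negativity of the exponent gives $\tau^{2H-2}\leqslant(|i-k|\Delta)^{2H-2} = |k-i|^{2H-2}\Delta^{2H-2}$; bounding $s\leqslant T$ and noting that $2^{2-2H}\leqslant 4$ for $H>0$ converts the prefactor into $4H|2H-1|T^2$. Taking the supremum over $s\in[0,T]$ and multiplying by $T$ produces precisely $4H|2H-1|T^3\,|k-i|^{2H-2}\Delta^{2H-2}$, which is the claimed bound.

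The main obstacle is the careful control of the symmetric second difference: one must track the sign of $2H-1$ (the function $x^{2H}$ is concave for $H<1/2$ and convex for $H>1/2$, which is why $|2H-1|$ and the absolute values appear throughout and why the bound vanishes at $H=1/2$, consistently with the independence of Brownian increments on disjoint intervals), and one must stay away from the singularity of $f''$ at the origin. This last point is exactly where the hypothesis $\Delta\geqslant T$ is used: it guarantees $|\tau|\geqslant 2s$, hence $w\geqslant\tau/2$ over the whole integration domain, so that $w^{2H-2}$ stays comparable to $\tau^{2H-2}$. Everything else reduces to the two routine integrations above.
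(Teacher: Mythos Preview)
Your proof is correct and follows essentially the same approach as the paper: both reduce $\mathbb E(Z_s^iZ_s^k)$ to the symmetric second difference $\tfrac12\bigl(|\tau+s|^{2H}+|\tau-s|^{2H}-2|\tau|^{2H}\bigr)$ and control it via the second derivative of $x\mapsto x^{2H}$, using $\Delta\geqslant T$ to ensure the argument stays bounded below by $\tau/2$. The only cosmetic difference is that the paper factors out $\tau^{2H}$ and applies the Taylor--Lagrange remainder to $(1+x)^{2H}$ near $x=0$, whereas you use the double-integral (integral-remainder) representation of the second difference directly; both routes produce the same factor $2^{2-2H}\leqslant 4$ and the same final constants.
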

%

% Proof.

%
\begin{proof}
Consider $s\in [0,T]$ and, without loss of generality, $i,k\in\{1,\dots,N\}$ such that $k\geqslant i$. First, since the fractional Brownian motion has stationary increments,
\begin{eqnarray*}
 \mathbb E(Z_{s}^{i}Z_{s}^{k}) & = &
 \mathbb E[(Z_{t_i(\Delta) + s} - Z_{t_i(\Delta)})(Z_{t_k(\Delta) + s} - Z_{t_k(\Delta)})]\\
 & = &
 \mathbb E[Z_s(Z_{(k - i)(T +\Delta) + s} - Z_{(k - i)(T +\Delta)})] =
 s^{2H}\mathbf 1_{i = k} +
 \frac{1}{2}R_{s}^{i,k}\mathbf 1_{k > i},
\end{eqnarray*}
where
\begin{displaymath}
R_{s}^{i,k} =
[(k - i)(T +\Delta)]^{2H}\left[
\left(1 +\frac{s}{(k - i)(T +\Delta)}\right)^{2H} +
\left(1 -\frac{s}{(k - i)(T +\Delta)}\right)^{2H} - 2\right]
\textrm{ $;$ }k > i.
\end{displaymath}
Now, assume that $k > i$. By the Taylor-Lagrange formula, there exists
\begin{displaymath}
\xi_{\pm s}^{i,k}\in\left(1,1\pm\frac{s}{(k - i)(T +\Delta)}\right)\subset (0,\infty)
\end{displaymath}
such that
\begin{displaymath}
\left(1\pm\frac{s}{(k - i)(T +\Delta)}\right)^{2H} =
1\pm\frac{2Hs}{(k - i)(T +\Delta)} +
H(2H - 1)\left(\frac{s}{(k - i)(T +\Delta)}\right)^2(\xi_{\pm s}^{i,k})^{2H - 2}.
\end{displaymath}
Then,
\begin{displaymath}
R_{s}^{i,k} =
H(2H - 1)\cdot [(k - i)(T +\Delta)]^{2H - 2}\cdot
s^2[(\xi_{s}^{i,k})^{2H - 2} + (\xi_{-s}^{i,k})^{2H - 2}].
\end{displaymath}
Moreover, since $2H - 2 < 0$,
\begin{displaymath}
(T +\Delta)^{2H - 2}\leqslant\Delta^{2H - 2}
\end{displaymath}
and
\begin{eqnarray*}
 (\xi_{\pm s}^{i,k})^{2H - 2}
 & \leqslant &
 \left(1 -\frac{T}{(k - i)(T +\Delta)}\right)^{2H - 2}\\
 & \leqslant &
 \left(\frac{\Delta}{T +\Delta}\right)^{2H - 2}\leqslant
 \left(\frac{1}{2}\right)^{2H - 2}\leqslant 4.
\end{eqnarray*}
Thus,
\begin{displaymath}
|R_{s}^{i,k}|
\leqslant
8H|2H - 1|T^2|k - i|^{2H - 2}\Delta^{2H - 2}.
\end{displaymath}
In conclusion, for every $i,k\in\{1,\dots,N\}$ and $s\in [0,T]$,
\begin{displaymath}
|\mathbb E(Z_{s}^{i}Z_{s}^{k})|
\leqslant
T^{2H}\mathbf 1_{i = k} +
4H|2H - 1|T^2|k - i|^{2H - 2}\Delta^{2H - 2}\mathbf 1_{i\neq k}.
\end{displaymath}
\end{proof}
\noindent
Now, as in Marie \cite{MARIE23}, let us consider a financial market model in which the prices process $S = (S_t)_{t\in\mathbb R_+}$ and the volatility process $\Sigma = (\Sigma_t)_{t\in\mathbb R_+}$ of the risky asset satisfy
\begin{equation}\label{Wiggins_fractional}
\left\{
\begin{array}{rcll}
 {\rm d}S_t & = & \mu(t)S_tdt +\Sigma_tS_t{\rm d}W_t & \quad ({\rm A})\\
 d\Sigma_t & = & \texttt b_0(t)\Sigma_tdt +\nu\Sigma_tdB_t & \quad ({\rm B})
\end{array}\right.,
\end{equation}
where $S_0,\Sigma_0 > 0$, $W = (W_t)_{t\in\mathbb R_+}$ (resp. $B = (B_t)_{t\in\mathbb R_+}$) is a Brownian motion (resp. a fractional Brownian motion of Hurst parameter $H\in (1/2,1)$), $W$ and $B$ are independent, $\nu > 0$ and $\mu,\texttt b_0\in C^0(\mathbb R_+;\mathbb R)$. Since the paths of $B$ are locally $\alpha$-H\"older continuous for every $\alpha\in (1/2,H)$, Equation (B) has a unique pathwise solution $\Sigma$ which is adapted to the natural filtration $\mathbb F$ of $(W,B)$, and since $W$ and $B$ are independent Gaussian processes, $W$ is a $\mathbb F$-Brownian motion, leading to the existence and uniqueness of the solution of Equation (A) in the sense of It\^o. Moreover, since $H > 1/2$, Model (\ref{Wiggins_fractional}) takes into account the persistence-in-volatility phenomenon as in Comte et al. \cite{CCR12}. For every $t\in\mathbb R_+$, thanks to the change of variable formula for Young's integral,
\begin{displaymath}
\Sigma_t =\Sigma_0\exp(X_t),\quad
X_t = b_0(t) + Z_t,\quad
b_0(t) =\int_{0}^{t}\texttt b_0(s)ds\quad {\rm and}\quad
Z_t =\nu B_t.
\end{displaymath}
Usually, only one long-time observation of $\Sigma$, and then of $X$, are available. So, in order to consider the estimators provided in Section \ref{section_nonparametric_estimators} for Model (\ref{main_model_copies}), $X^1,\dots,X^N$ must be defined from $(X_t)_{t\in\mathbb R_+}$ by (\ref{long_time_observation_copies_fractional_model}).
%

% Section : Nonparametric estimators of b_0 and b_0'.

%
\section{Nonparametric estimators of $b_0$ and $b_0'$}\label{section_nonparametric_estimators}
%

% Subsection : Nonparametric estimator of b_0.

%
\subsection{Nonparametric estimator of $b_0$}\label{subsection_estimator_b_0}
The following proposition provides a risk bound on $\widehat b =\overline X$.
%

% Proposition : Risk bound on the estimator of b_0.

%
\begin{proposition}\label{risk_bound_estimator_b_0}
Assume that $Z^1,\dots,Z^N$ satisfy the condition (\ref{noise_covariance_condition}). Then,
\begin{equation}\label{risk_bound_estimator_b_0_1}
\mathbb E(\|\widehat b - b_0\|^2)\leqslant\mathfrak R_N
\quad\textrm{with}\quad
\mathfrak R_N =\frac{1}{N^2}\sum_{i,k = 1}^{N}\Gamma^{i,k}.
\end{equation}
\end{proposition}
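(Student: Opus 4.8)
The plan is to exploit the linear, additive structure of Model (\ref{main_model_copies}) so that the centering function $b_0$ cancels upon averaging, reducing the risk to a pure second-moment computation on the averaged noise. First I would observe that, since $X_t^i = b_0(t) + Z_t^i$ for every $i$, averaging over $i\in\{1,\dots,N\}$ gives $\overline X_t = b_0(t) +\overline Z_t$, where $\overline Z :=\frac{1}{N}\sum_{i = 1}^{N}Z^i$. Consequently $\widehat b - b_0 =\overline X - b_0 =\overline Z$, so that the whole problem amounts to bounding $\mathbb E(\|\overline Z\|^2)$, the norm $\|.\|$ being that of $\mathbb L^2([0,T],dt)$.

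Next I would write $\|\overline Z\|^2 =\int_{0}^{T}\overline Z_{t}^{2}dt$, take expectations, and interchange the expectation and the time integral. Expanding the square and using the bilinearity of the product together with the linearity of both $\mathbb E$ and the integral then yields
\begin{displaymath}
\mathbb E(\|\overline Z\|^2) =
\frac{1}{N^2}\sum_{i,k = 1}^{N}\int_{0}^{T}\mathbb E(Z_{t}^{i}Z_{t}^{k})dt.
\end{displaymath}

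Finally I would bound the sum term by term: for each pair $(i,k)$ one has $\int_{0}^{T}\mathbb E(Z_{t}^{i}Z_{t}^{k})dt\leqslant\left|\int_{0}^{T}\mathbb E(Z_{t}^{i}Z_{t}^{k})dt\right|$, and the covariance condition (\ref{noise_covariance_condition}) bounds the right-hand side by $\Gamma^{i,k}$. Summing over $i,k$ reproduces exactly $\mathfrak R_N =\frac{1}{N^2}\sum_{i,k}\Gamma^{i,k}$, which gives (\ref{risk_bound_estimator_b_0_1}).

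The only delicate point — and it is a minor one — is justifying the interchange of expectation and integration by Fubini's theorem. This requires that $(t,\omega)\mapsto Z_{t}^{i}(\omega)Z_{t}^{k}(\omega)$ be jointly measurable and that $\int_{0}^{T}\mathbb E|Z_{t}^{i}Z_{t}^{k}|dt <\infty$. The former follows from the continuity of the paths of the 2nd-order process $Z$, and the latter from the Cauchy--Schwarz inequality combined with the fact that $t\mapsto\mathbb E((Z_{t}^{i})^2)$ is continuous, hence bounded, on the compact interval $[0,T]$; in particular the integrals appearing above are well defined, as is already implicitly assumed in (\ref{noise_covariance_condition}). Once this is granted, every step is an elementary identity or the direct application of the standing hypothesis.
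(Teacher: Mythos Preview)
Your proof is correct and follows essentially the same route as the paper's: both compute $\widehat b - b_0 =\overline Z$, expand $\mathbb E(\|\overline Z\|^2)$ as $\frac{1}{N^2}\sum_{i,k}\int_{0}^{T}\mathbb E(Z_{t}^{i}Z_{t}^{k})dt$, and then apply the bound from condition (\ref{noise_covariance_condition}) termwise. Your version is slightly more detailed in justifying the Fubini step, which the paper leaves implicit.
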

%

% Proof.

%
\begin{proof}
Since $Z^1,\dots,Z^N$ satisfy the condition (\ref{noise_covariance_condition}),
\begin{eqnarray*}
 \mathbb E(\|\widehat b - b_0\|^2)
 & = &
 \frac{1}{N^2}\mathbb E\left(\left\|\sum_{i = 1}^{N}Z^i\right\|^2\right)\\
 & = &
 \frac{1}{N^2}\sum_{i,k = 1}^{N}\int_{0}^{T}\mathbb E(Z_{s}^{i}Z_{s}^{k})ds\leqslant
 \frac{1}{N^2}\sum_{i,k = 1}^{N}\Gamma^{i,k}.
\end{eqnarray*}
\end{proof}
%

% Remark : Remark about the risk bound on the estimator of b_0.

%
\begin{remark}\label{remark_risk_bound_estimator_b_0}
Assume that
\begin{displaymath}
\sigma^2 =\int_{0}^{T}\mathbb E(Z_{s}^{2})ds <\infty,
\end{displaymath}
and that $Z^1,\dots,Z^N$ are independent copies of $Z$. Then, one may take $\Gamma^{i,k} = 0$ for every $i,k\in\{1,\dots,N\}$ such that $i\not= k$, and $\Gamma^{i,i} =\sigma^2$ for every $i\in\{1,\dots,N\}$, leading to
\begin{displaymath}
\mathbb E(\|\widehat b - b_0\|^2)
\leqslant\frac{\sigma^2}{N}
\quad\textrm{by Proposition \ref{risk_bound_estimator_b_0}}.
\end{displaymath}
\end{remark}
%

% Subsection : Nonparametric estimator of b_0'.

%
\subsection{Nonparametric estimator of $b_0'$}\label{subsection_estimator_b_0'}
Throughout this section, $b_0$ is continuously differentiable from $[0,T]$ into $\mathbb R$, and the paths of $Z$ are locally $\alpha$-H\"older continuous with $\alpha\in (0,1]$. In Models (\ref{linear_SDE_correlated_noises_transformed}) and (\ref{linear_fDE_random_effect_transformed}) presented in Section \ref{subsection_examples_dynamics}, $b_0$ depends on $\int_{0}^{.}\texttt b_0$ with $\texttt b_0\in C^0([0,T];\mathbb R)$. For this reason, $b_0'$ needs to be estimated. As explained in the introduction section, a natural estimator of $b_0'$ is given by
\begin{displaymath}
\widehat b_m' =\sum_{j = 1}^{m}
\mathcal I(\varphi_j,\overline X)\varphi_j,
\end{displaymath}
where $\mathcal I(\cdot)$ is the Young integral operator.
\\
\\
The outline of this section is as follows: Proposition \ref{risk_bound_estimator_b_0'_general} provides a risk bound on $\widehat b_m'$ with no additional condition on the $Z^i$'s and then, when $Z$ is a Gaussian process with a regular enough covariance function, thanks to the Young-Wiener isometry (see Proposition \ref{Young_Wiener_isometry}.(3)); Proposition \ref{risk_bound_estimator_b_0'_Gaussian} deals with a sharper risk bound on $\widehat b_m'$, and a model selection procedure is provided with theoretical guarantees on the corresponding adaptive estimator in Theorem \ref{risk_bound_adaptive_estimator_b_0'_Gaussian}.
\\
\\
{\bf Notation.} In the sequel,
\begin{displaymath}
\mathcal L(m) =
\sum_{j = 1}^{m}\|\varphi_j\|_{\infty}^{2}
\quad {\rm and}\quad
\overline{\mathcal L}(m) =
\sum_{j = 1}^{m}\|\varphi_j'\|_{\infty}^{2}.
\end{displaymath}
First, the following proposition provides a risk bound on $\widehat b_m'$ in the general case.
%

% Proposition : Risk bound on the projection-based estimator of b_0' (general case).

%
\begin{proposition}\label{risk_bound_estimator_b_0'_general}
Assume that for every $i,k\in\{1,\dots,N\}$,
\begin{equation}\label{risk_bound_estimator_b_0'_general_1}
T\sup_{t\in [0,T]}|\mathbb E(Z_{t}^{i}Z_{t}^{k})|
\leqslant\Gamma^{i,k}
\quad\textrm{with}\quad
\Gamma^{i,k}\geqslant 0,
\quad\textrm{and}\quad
\sum_{i,k = 1}^{N}\Gamma^{i,k}\underset{N\rightarrow\infty}{=}{\rm o}(N^2).
\end{equation}
Then,
\begin{equation}\label{risk_bound_estimator_b_0'_general_2}
\mathbb E(\|\widehat b_m' - b_0'\|^2)\leqslant
\min_{\tau\in\mathcal S_m}\|\tau - b_0'\|^2 +
2(T\vee T^{-1})
(\mathcal L(m) +\overline{\mathcal L}(m))\mathfrak R_N.
\end{equation}
\end{proposition}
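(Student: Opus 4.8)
The plan is to split $\widehat b_m'$ into a deterministic projection of $b_0'$ plus a centered stochastic term, and then to run a bias--variance (Pythagorean) argument. Write $\overline Z :=\frac{1}{N}\sum_{i=1}^{N}Z^i$, so that $\overline X = b_0 +\overline Z$. Since $b_0\in C^1([0,T];\mathbb R)$, the change-of-variable formula for the Young integral gives $\int_{0}^{T}\varphi_j\,db_0 =\int_{0}^{T}\varphi_j(s)b_0'(s)\,ds =\langle\varphi_j,b_0'\rangle$, and by linearity of the Young integral,
\begin{displaymath}
\int_{0}^{T}\varphi_j\,d\overline X =\langle\varphi_j,b_0'\rangle +\int_{0}^{T}\varphi_j\,d\overline Z.
\end{displaymath}
Summing against $\varphi_j$ over $j\in\{1,\dots,m\}$ yields $\widehat b_m' =\Pi_m(b_0') + W_m$ with $W_m :=\sum_{j=1}^{m}\bigl(\int_{0}^{T}\varphi_j\,d\overline Z\bigr)\varphi_j$.

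Next I would isolate the approximation error. The term $\Pi_m(b_0') - b_0'$ is orthogonal to $\mathcal S_m$ whereas $W_m\in\mathcal S_m$, so by the Pythagorean identity
\begin{displaymath}
\|\widehat b_m' - b_0'\|^2 =\|\Pi_m(b_0') - b_0'\|^2 +\|W_m\|^2,
\end{displaymath}
and $\|\Pi_m(b_0') - b_0'\|^2 =\min_{\tau\in\mathcal S_m}\|\tau - b_0'\|^2$ by the best-approximation property of the orthogonal projection. Since $(\varphi_1,\dots,\varphi_m)$ is orthonormal, $\|W_m\|^2 =\sum_{j=1}^{m}(\int_{0}^{T}\varphi_j\,d\overline Z)^2$, so taking expectations reduces the claim to the variance bound $\sum_{j=1}^{m}\mathbb E[(\int_{0}^{T}\varphi_j\,d\overline Z)^2]\leqslant 2(T\vee T^{-1})(\mathcal L(m)+\overline{\mathcal L}(m))\mathfrak R_N$.

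The heart of the argument, and the step I expect to be the main obstacle, is to control each $\mathbb E[(\int_{0}^{T}\varphi_j\,d\overline Z)^2]$: the hypothesis only bounds the \emph{pointwise} second moments $\mathbb E(Z_t^iZ_t^k)$, so a direct moment estimate on the Young integral (which would require variation or regularity norms of $\overline Z$) is unavailable. The remedy is to integrate by parts in the sense of Young (Friz and Victoir \cite{FV10}): using that the noise vanishes at the origin ($\overline Z_0 = 0$, as in every model of Section \ref{section_examples}) and that $\varphi_j\in C^1$,
\begin{displaymath}
\int_{0}^{T}\varphi_j\,d\overline Z =\varphi_j(T)\overline Z_T -\int_{0}^{T}\overline Z_s\varphi_j'(s)\,ds,
\end{displaymath}
which expresses the integral through values of $\overline Z$ at fixed times only. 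From the covariance hypothesis, $\mathbb E(\overline Z_t^2)=\frac{1}{N^2}\sum_{i,k}\mathbb E(Z_t^iZ_t^k)\leqslant\frac{1}{N^2}\sum_{i,k}\frac{\Gamma^{i,k}}{T}=\frac{\mathfrak R_N}{T}$ for every $t\in[0,T]$. Then $(a-b)^2\leqslant 2a^2+2b^2$, the Cauchy--Schwarz bound $(\int_{0}^{T}\overline Z_s\varphi_j'(s)\,ds)^2\leqslant\|\varphi_j'\|_{\infty}^{2}\,T\int_{0}^{T}\overline Z_s^2\,ds$, and Tonelli's theorem give
\begin{displaymath}
\mathbb E\Bigl[\Bigl(\int_{0}^{T}\varphi_j\,d\overline Z\Bigr)^2\Bigr]\leqslant 2\|\varphi_j\|_{\infty}^{2}\frac{\mathfrak R_N}{T}+2T\|\varphi_j'\|_{\infty}^{2}\mathfrak R_N\leqslant 2(T\vee T^{-1})\bigl(\|\varphi_j\|_{\infty}^{2}+\|\varphi_j'\|_{\infty}^{2}\bigr)\mathfrak R_N.
\end{displaymath}
Summing over $j\in\{1,\dots,m\}$ and recalling the definitions of $\mathcal L(m)$ and $\overline{\mathcal L}(m)$ produces the stated variance term and closes the proof.
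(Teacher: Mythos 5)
Your proof is correct and takes essentially the same route as the paper's: the same Pythagorean bias--variance decomposition $\|\widehat b_m' - b_0'\|^2 =\min_{\tau\in\mathcal S_m}\|\tau - b_0'\|^2 +\sum_{j}(\int_{0}^{T}\varphi_j\,d\overline Z)^2$, the same Young integration by parts $\int_{0}^{T}\varphi_j\,d\overline Z =\varphi_j(T)\overline Z_T -\int_{0}^{T}\overline Z_s\varphi_j'(s)\,ds$, and the same pointwise bound $\mathbb E(\overline Z_t^2)\leqslant\mathfrak R_N/T$ followed by Cauchy--Schwarz. The only cosmetic differences are that you bound the cross term via $\|\varphi_j'\|_{\infty}$ directly where the paper passes through $\|\varphi_j'\|\leqslant\sqrt T\,\|\varphi_j'\|_{\infty}$, and that you make explicit the assumption $\overline Z_0 = 0$ in the integration by parts, which the paper leaves implicit.
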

%

% Proof.

%
\begin{proof}
First, since $\widehat b_m' -\Pi_m(b_0')\in\mathcal S_m$,
\begin{eqnarray*}
 \|\widehat b_m' - b_0'\|^2 & = &
 \|\widehat b_m' -\Pi_m(b_0') +\Pi_m(b_0') - b_0'\|^2\\
 & = &
 \min_{\tau\in\mathcal S_m}\|\tau - b_0'\|^2 +
 \|\widehat b_m' -\Pi_m(b_0')\|^2.
\end{eqnarray*}
Moreover,
\begin{eqnarray*}
 \|\widehat b_m' -\Pi_m(b_0')\|^2 & = &
 \left\|\sum_{j = 1}^{m}(\mathcal I(\varphi_j,\overline X) -\langle\varphi_j,b_0'\rangle)\varphi_j\right\|^2\\
 & = &
 \left\|\sum_{j = 1}^{m}\mathcal I(\varphi_j,\overline X - b_0)\varphi_j\right\|^2 =
 \underbrace{\sum_{j = 1}^{m}\left(\int_{0}^{T}\varphi_j(s)d\overline Z_s\right)^2}_{=: V_m}.
\end{eqnarray*}
Then,
\begin{equation}\label{risk_bound_estimator_b_0'_general_3}
\|\widehat b_m' - b_0'\|^2 =
\min_{\tau\in\mathcal S_m}\|\tau - b_0'\|^2 + V_m.
\end{equation}
Now, thanks to the integration by parts formula,
\begin{displaymath}
\int_{0}^{T}\varphi_j(s)d\overline Z_s =
\varphi_j(T)\overline Z_T -\langle\varphi_j',\overline Z\rangle
\textrm{ $;$ }\forall j\in\{1,\dots,m\},
\end{displaymath}
leading to
\begin{eqnarray*}
 \mathbb E(V_m) & = &
 \sum_{j = 1}^{m}\mathbb E(|\varphi_j(T)\overline Z_T -\langle\varphi_j',\overline Z\rangle|^2)\\
 & \leqslant &
 2\mathbb E(\overline Z_{T}^{2})\sum_{j = 1}^{m}\varphi_j(T)^2 +
 2\mathbb E(\|\overline Z\|^2)\sum_{j = 1}^{m}\|\varphi_j'\|^2\leqslant
 \frac{2(\mathcal L(m)T^{-1} +\overline{\mathcal L}(m)T)}{N^2}\sum_{i,k = 1}^{N}\Gamma^{i,k}.
\end{eqnarray*}
Therefore, by Equality (\ref{risk_bound_estimator_b_0'_general_3}),
\begin{displaymath}
\mathbb E(\|\widehat b_m' - b_0'\|^2)\leqslant
\min_{\tau\in\mathcal S_m}\|\tau - b_0'\|^2 +
2(T\vee T^{-1})\frac{\mathcal L(m) +\overline{\mathcal L}(m)}{N^2}\sum_{i,k = 1}^{N}\Gamma^{i,k}.
\end{displaymath}
\end{proof}
%

% Remark : Remark about the risk bound on the projection-based estimator of b_0' (general case).

%
\begin{remark}\label{remark_risk_bound_estimator_b_0'_general}
Let us make a few comments about the risk bound (\ref{risk_bound_estimator_b_0'_general_2}).
\begin{enumerate}
 \item The order of the bias term
 \begin{displaymath}
 \min_{\tau\in\mathcal S_m}\|\tau - b_0'\|^2
 \quad\textrm{in Proposition \ref{risk_bound_estimator_b_0'_general} depends on the $\varphi_j$'s}.
 \end{displaymath}
 For instance, assume that $(\varphi_1,\dots,\varphi_m)$ is the $[0,T]$-supported trigonometric basis: $\varphi_1 = T^{-\frac{1}{2}}\mathbf 1_{[0,T]}$ and, for every $j\in\mathbb N^*$ such that $2j + 1\leqslant m$,
 \begin{displaymath}
 \varphi_{2j} =\sqrt{\frac{2}{T}}\cos\left(2\pi j\frac{\cdot}{T}\right)
 \quad\textrm{and}\quad
 \varphi_{2j + 1} =\sqrt{\frac{2}{T}}\sin\left(2\pi j\frac{\cdot}{T}\right).
 \end{displaymath}
 Assume also that $b_0$ belongs to the Fourier-Sobolev space
 \begin{displaymath}
 \mathbb W_{2}^{\beta}([0,T]) =
 \left\{\varphi : [0,T]\rightarrow\mathbb R
 \textrm{ $\beta$ times differentiable} :
 \int_{0}^{T}\varphi^{(\beta)}(x)^2dx <\infty\right\}
 \textrm{ $;$ }\beta\in\mathbb N^*.
 \end{displaymath}
 By DeVore and Lorentz \cite{DL93}, Corollary 2.4 p. 205, there exists a constant $\mathfrak c_{\beta} > 0$, not depending on $m$, such that
 \begin{displaymath}
 \min_{\tau\in\mathcal S_m}\|\tau - b_0\|^2\leqslant\mathfrak c_{\beta}m^{-2\beta}.
 \end{displaymath}
 Moreover,
 \begin{displaymath}
 \mathcal L(m)\lesssim m
 \quad\textrm{and}\quad
 \overline{\mathcal L}(m)\lesssim m^3.
 \end{displaymath}
 So,
 \begin{displaymath}
 \mathbb E(\|\widehat b_m' - b_0'\|^2)\lesssim
 m^{-2\beta} + m^3\mathfrak R_N
 \quad\textrm{by Proposition \ref{risk_bound_estimator_b_0'_general}},
 \end{displaymath}
 and then $\widehat b_m'$ reaches the bias-variance tradeoff for
 \begin{displaymath}
 m\asymp\mathfrak R_{N}^{-\frac{1}{2\beta + 3}},
 \quad\textrm{leading to the rate $\mathfrak R_{N}^{\frac{2\beta}{2\beta + 3}}$}.
 \end{displaymath}
 \item When $Z^1,\dots,Z^N$ are independent copies of $Z$, at least for the trigonometric basis, the risk bound (\ref{risk_bound_estimator_b_0'_general_2}) on $\widehat b_m'$ is of same order as that on the estimator of the regression function derivative in Comte and Marie \cite{CM23} (see Proposition 3.9 and Section 4.2). However, the second part of Section \ref{subsection_estimator_b_0'} deals with a sharper risk bound on $\widehat b_m'$ when $Z$ is a Gaussian process with a regular enough covariance function.
\end{enumerate}
\end{remark}
\noindent
Now, assume that $Z$ is a Gaussian process such that $\mathbb E(\|Z\|_{\alpha,T}^{2}) <\infty$, and which covariance function is denoted by $R$. Moreover, there exists a constant $\mathfrak c_R > 0$ such that, for every $\varphi,\psi\in C^1([0,T];\mathbb R)$,
\begin{equation}\label{noise_covariance_condition_Gaussian_1}
|\langle\varphi,\psi\rangle_R|
\leqslant\mathfrak c_R\|\varphi\|\cdot\|\psi\|
\quad {\rm with}\quad
\langle\varphi,\psi\rangle_R :=
\int_{[0,T]^2}\varphi(s)\psi(t)dR(s,t).
\end{equation}
Assume also that $(Z^1,\dots,Z^N)$ is a Gaussian process whose components are copies of $Z$, and that there exists a correlation matrix $\Gamma_{\star} = (\Gamma_{\star}^{i,k})_{i,k}$ such that, for every $i,k\in\{1,\dots,N\}$ and $s,t\in [0,T]$,
\begin{equation}\label{noise_covariance_condition_Gaussian_2}
\mathbb E(Z_{s}^{i}Z_{t}^{k}) =\Gamma_{\star}^{i,k}R(s,t)
\quad\textrm{with}\quad
\sum_{i,k = 1}^{N}|\Gamma_{\star}^{i,k}|\underset{N\rightarrow\infty}{=}\textrm o(N^2).
\end{equation}
In particular, $Z^1,\dots,Z^N$ satisfy the condition (\ref{noise_covariance_condition}) with
\begin{equation}\label{matrix_Gamma_Gaussian}
\Gamma^{i,k} =
\underbrace{\left|\int_{0}^{T}R(s,s)ds\right|}_{=:\gamma_R}\cdot
|\Gamma_{\star}^{i,k}|
\textrm{ $;$ }\forall i,k\in\{1,\dots,N\}.
\end{equation}
In both the proofs of Proposition \ref{risk_bound_estimator_b_0'_Gaussian} and Theorem \ref{risk_bound_adaptive_estimator_b_0'_Gaussian}, the condition (\ref{noise_covariance_condition_Gaussian_2}) allows to apply Proposition \ref{Young_Wiener_isometry}.(3) to establish controls depending on $\|.\|_R$, and then the condition (\ref{noise_covariance_condition_Gaussian_1}) allows to use that $(\varphi_1,\dots,\varphi_N)$ is an orthonormal family of $\mathbb L^2([0,T],dt)$.
%

% Example : Examples of processes satisfying the condition noise_covariance_condition_Gaussian_1.

%
\begin{example}\label{example_noise_covariance_condition_Gaussian_1}
Let us provide a few examples of Gaussian processes satisfying the condition (\ref{noise_covariance_condition_Gaussian_1}).
\begin{enumerate}
 \item Assume that $Z$ is a Brownian motion. Then, for every $\varphi,\psi\in C^1([0,T];\mathbb R)$,
 \begin{eqnarray*}
  |\langle\varphi,\psi\rangle_R| & = &
  \left|\mathbb E\left(\left(\int_{0}^{T}\varphi(s)dZ_s\right)
  \left(\int_{0}^{T}\psi(s)dZ_s\right)\right)\right|
  \quad\textrm{(by Proposition \ref{Young_Wiener_isometry})}\\
  & = &
  \left|\int_{0}^{T}\varphi(s)\psi(s)d\langle Z\rangle_s\right|
  \leqslant
  \mathfrak c_R\|\varphi\|\cdot\|\psi\|
  \quad\textrm{with}\quad
  \mathfrak c_R = 1.
 \end{eqnarray*}
 \item Consider $U_T =\{(s,t)\in [0,T]^2 : s\neq t\}$, and assume that
 \begin{equation}\label{example_noise_covariance_condition_Gaussian_1_1}
 dR(s,t) =\rho(s,t)dsdt,
 \end{equation}
 where $\rho$ is a symmetric and continuous map from $U_T$ into $\mathbb R_+$ such that
 \begin{displaymath}
 \mathfrak c_R =
 \sup_{t\in [0,T]}\int_{0}^{T}\rho(s,t)ds <\infty.
 \end{displaymath}
 Then, for every $\varphi,\psi\in C^1([0,T];\mathbb R)$,
 \begin{eqnarray*}
  |\langle\varphi,\psi\rangle_R| & = &
  \left|\int_{[0,T]^2}\varphi(s)\rho(s,t)^{\frac{1}{2}}\psi(t)\rho(s,t)^{\frac{1}{2}}dsdt\right|\\
  & \leqslant &
  \left|\int_{0}^{T}\varphi(s)^2\int_{0}^{T}\rho(s,t)dtds\right|^{\frac{1}{2}}
  \left|\int_{0}^{T}\psi(t)^2\int_{0}^{T}\rho(s,t)dsdt\right|^{\frac{1}{2}}\leqslant
  \mathfrak c_R\|\varphi\|\cdot\|\psi\|.
 \end{eqnarray*}
 Here are two explicit examples of processes whose covariance function satisfies the condition (\ref{example_noise_covariance_condition_Gaussian_1_1}):
 \begin{enumerate}
  \item Let $R_H$ be the covariance function of the fractional Brownian motion of Hurst parameter $H\in (1/2,1)$, and note that
  \begin{displaymath}
  dR_H(s,t) =\rho_H(s,t)dsdt
  \quad\textrm{with}\quad
  \rho_H(s,t) =
  \underbrace{H(2H - 1)}_{=:\alpha_H}|t - s|^{2H - 2}.
  \end{displaymath}
  Since $H\in (1/2,1)$, for every $t\in [0,T]$,
  \begin{eqnarray*}
   \int_{0}^{T}\rho_H(s,t)ds & = &
   \alpha_H\left(\int_{0}^{t}s^{2H - 2}ds +\int_{0}^{T - t}s^{2H - 2}ds\right)\\
   & = &
   \frac{\alpha_H}{2H - 1}(t^{2H - 1} + (T - t)^{2H - 1})
   \leqslant
   2HT^{2H - 1},
  \end{eqnarray*}
  and then $R_H$ satisfies the condition (\ref{example_noise_covariance_condition_Gaussian_1_1}).
  \item Assume that
  \begin{displaymath}
  Z_t =\phi t +\sigma B_t
  \textrm{ $;$ }\forall t\in [0,T],
  \end{displaymath}
  where $\sigma\in\mathbb R^*$, $B$ is a fractional Brownian motion of Hurst parameter $H\in (1/2,1)$, and $\phi$ is a centered Gaussian random variable of variance $\sigma_{\phi}^{2} > 0$ such that $\phi$ and $B$ are independent. So,
  \begin{eqnarray*}
   dR(s,t) & = &
   \sigma_{\phi}^{2}dsdt +\sigma^2dR_H(s,t)\\
   & = &
   (\sigma_{\phi}^{2} +\sigma^2\alpha_H|t - s|^{2H - 2})dsdt.
  \end{eqnarray*}
  Since $H\in (1/2,1)$, $R$ satisfies the condition (\ref{example_noise_covariance_condition_Gaussian_1_1}).
 \end{enumerate}
\end{enumerate}
\end{example}
\noindent
Let us establish a risk bound on $\widehat b_m'$ - sharper than in Proposition \ref{risk_bound_estimator_b_0'_general} - by using Proposition \ref{Young_Wiener_isometry}.
%

% Proposition : Risk bound on the projection-based estimator of b_0' (Gaussian case).

%
\begin{proposition}\label{risk_bound_estimator_b_0'_Gaussian}
Under the conditions (\ref{noise_covariance_condition_Gaussian_1}) and (\ref{noise_covariance_condition_Gaussian_2}),
\begin{equation}\label{risk_bound_estimator_b_0'_Gaussian_1}
\mathbb E(\|\widehat b_m' - b_0'\|^2)\leqslant
\min_{\tau\in\mathcal S_m}\|\tau - b_0'\|^2 +
\frac{\mathfrak c_R}{\gamma_R}m\mathfrak R_N.
\end{equation}
\end{proposition}
%

% Proof.

%
\begin{proof}
First, by Equality (\ref{risk_bound_estimator_b_0'_general_3}),
\begin{equation}\label{risk_bound_estimator_b_0'_Gaussian_2}
\|\widehat b_m' - b_0'\|^2 =
\min_{\tau\in\mathcal S_m}\|\tau - b_0'\|^2 +
\sum_{j = 1}^{m}\left(\int_{0}^{T}\varphi_j(s)d\overline Z_s\right)^2.
\end{equation}
Now, by the condition (\ref{noise_covariance_condition_Gaussian_2}) on $Z^1,\dots,Z^N$, and by Proposition \ref{Young_Wiener_isometry}, for every $j\in\{1,\dots,m\}$,
\begin{eqnarray}
 \mathbb E\left[\left(\int_{0}^{T}\varphi_j(s)d\overline Z_s\right)^2\right]
 & = &
 \frac{1}{N^2}\sum_{i,k = 1}^{N}\mathbb E\left(
 \left(\int_{0}^{T}\varphi_j(s)dZ_{s}^{i}\right)
 \left(\int_{0}^{T}\varphi_j(s)dZ_{s}^{k}\right)\right)
 \nonumber\\
 \label{risk_bound_estimator_b_0'_Gaussian_3}
 & = &
 \frac{1}{N^2}\left(
 \int_{[0,T]^2}\varphi_j(s)\varphi_j(t)dR(s,t)\right)
 \sum_{i,k = 1}^{N}\Gamma_{\star}^{i,k} =
 \frac{\|\varphi_j\|_{R}^{2}}{N^2}\sum_{i,k = 1}^{N}\Gamma_{\star}^{i,k}.
\end{eqnarray}
Moreover, (\ref{noise_covariance_condition_Gaussian_1}) and (\ref{matrix_Gamma_Gaussian}) - respectively - lead to
\begin{displaymath}
\sum_{j = 1}^{m}\|\varphi_j\|_{R}^{2}
\leqslant
\mathfrak c_Rm
\quad {\rm and}\quad
\left|\sum_{i,k = 1}^{N}\Gamma_{\star}^{i,k}\right|
\leqslant
\frac{1}{\gamma_R}
\sum_{i,k = 1}^{N}\Gamma^{i,k}.
\end{displaymath}
Therefore, by Equality (\ref{risk_bound_estimator_b_0'_Gaussian_2}),
\begin{displaymath}
\mathbb E(\|\widehat b_m' - b_0'\|^2)
\leqslant
\min_{\tau\in\mathcal S_m}\|\tau - b_0'\|^2 +
\frac{\mathfrak c_R}{\gamma_R}\cdot
\frac{m}{N^2}
\sum_{i,k = 1}^{N}\Gamma^{i,k}.
\end{displaymath}
\end{proof}
%

% Remark : Remark about the risk of the projection-based estimator of b_0' (Gaussian case).

%
\begin{remark}\label{remark_risk_bound_estimator_b_0'_Gaussian}
Assume that $(\varphi_1,\dots,\varphi_m)$ is the $[0,T]$-supported trigonometric basis, and that $b_0$ belongs to the Fourier-Sobolev space $\mathbb W_{2}^{\beta}([0,T])$. As mentioned in Remark \ref{remark_risk_bound_estimator_b_0'_general}.(1),
\begin{displaymath}
\min_{\tau\in\mathcal S_m}\|\tau - b_0'\|^2\leqslant\mathfrak c_{\beta}m^{-2\beta}.
\end{displaymath}
So, by Proposition \ref{risk_bound_estimator_b_0'_Gaussian},
\begin{displaymath}
\mathbb E(\|\widehat b_m' - b_0'\|^2)\lesssim
m^{-2\beta} + m\mathfrak R_N,
\end{displaymath}
and then $\widehat b_m'$ reaches the bias-variance tradeoff for
\begin{displaymath}
m\asymp
\mathfrak R_{N}^{-\frac{1}{2\beta + 1}},
\quad\textrm{leading to the rate $\mathfrak R_{N}^{\frac{2\beta}{2\beta + 1}}$ instead of $\mathfrak R_{N}^{\frac{2\beta}{2\beta + 3}}$},
\end{displaymath}
which illustrates that Proposition \ref{risk_bound_estimator_b_0'_Gaussian} provides a sharper risk bound on $\widehat b_m'$ than Proposition \ref{risk_bound_estimator_b_0'_general}.
\end{remark}
\noindent
Note that $\widehat b_m'$ is the minimizer, in $\mathcal S_m$, of the objective function $\gamma_N$ such that
\begin{displaymath}
\gamma_N(\tau) =
\|\tau\|^2 -\frac{2}{N}\sum_{i = 1}^{N}\int_{0}^{T}\tau(s)dX_{s}^{i}
\textrm{ $;$ }\forall\tau\in\mathcal S_m.
\end{displaymath}
This legitimates to consider the adaptive estimator $\widehat b' =\widehat b_{\widehat m}'$ with $\widehat m$ selected in $\mathcal M_N =\{1,\dots,N\}$ in the following way:
\begin{equation}\label{model_selection_criterion}
\widehat m =
\underset{m\in\mathcal M_N}{\rm argmin}
\{\gamma_N(\widehat b_m') + {\rm pen}(m)\},
\end{equation}
where
\begin{displaymath}
{\rm pen}(m) =
\mathfrak c_{\rm cal}m\mathfrak R_N
\textrm{ $;$ }\forall m\in\mathcal M_N,
\end{displaymath}
and $\mathfrak c_{\rm cal}\geqslant 4\delta^2$ ($\delta$ is defined by Equality (\ref{risk_bound_adaptive_estimator_b_0'_Gaussian_3}) in the proof of Theorem \ref{risk_bound_adaptive_estimator_b_0'_Gaussian}).
\\
\\
Under the conditions of Proposition \ref{risk_bound_estimator_b_0'_Gaussian}, the following theorem provides a risk bound on $\widehat b'$ when $\mathcal S_1,\dots,\mathcal S_N$ are nested models (i.e. $\mathcal S_m\subset\mathcal S_{m^{\star}}$ for every $m,m^{\star}\in\{1,\dots,N\}$ such that $m^{\star} > m$).
%

% Theorem : Risk bound on the adaptive projection-based estimator of b_0' (Gaussian case).

%
\begin{theorem}\label{risk_bound_adaptive_estimator_b_0'_Gaussian}
Assume that $\mathcal S_1,\dots,\mathcal S_N$ are nested models. Under the conditions (\ref{noise_covariance_condition_Gaussian_1}) and (\ref{noise_covariance_condition_Gaussian_2}),
\begin{displaymath}
\mathbb E(\|\widehat b' - b_0'\|^2)\lesssim
\min_{m\in\mathcal M_N}\left\{
\min_{\tau\in\mathcal S_m}\|\tau - b_0'\|^2 + m\mathfrak R_N\right\} +
\mathfrak R_N.
\end{displaymath}
\end{theorem}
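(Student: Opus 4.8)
The plan is to run a standard model-selection (oracle inequality) argument built on the contrast $\gamma_N$, whose minimizer over $\mathcal S_m$ is $\widehat b_m'$. The starting point is the algebraic identity obtained by writing $\int_0^T\tau d\overline X=\langle\tau,b_0'\rangle+\nu(\tau)$ with the centered linear form $\nu(\tau):=\int_0^T\tau(s)d\overline Z_s$, which gives $\gamma_N(\tau)=\|\tau-b_0'\|^2-\|b_0'\|^2-2\nu(\tau)$ for every $\tau\in\mathcal S_m$. By definition of $\widehat m$, for an arbitrary $m\in\mathcal M_N$ one has $\gamma_N(\widehat b')+{\rm pen}(\widehat m)\leqslant\gamma_N(\widehat b_m')+{\rm pen}(m)$, and adding the constant $\|b_0'\|^2$ turns this into
\begin{displaymath}
\|\widehat b'-b_0'\|^2\leqslant\|\widehat b_m'-b_0'\|^2+2\nu(\widehat b'-\widehat b_m')+{\rm pen}(m)-{\rm pen}(\widehat m).
\end{displaymath}

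Next I would control the empirical term. Since the models are nested, $u:=\widehat b'-\widehat b_m'\in\mathcal S_{m\vee\widehat m}$, and because $\nu$ is linear, $\nu(u)\leqslant\|u\|\cdot\sup\{\nu(\tau):\tau\in\mathcal S_{m\vee\widehat m},\,\|\tau\|\leqslant1\}=\|u\|\,V_{m\vee\widehat m}^{1/2}$, where $V_{m'}=\sum_{j=1}^{m'}(\int_0^T\varphi_jd\overline Z)^2$ is exactly the variance quantity appearing in the proof of Proposition \ref{risk_bound_estimator_b_0'_general}. Using $2ab\leqslant\tfrac14a^2+4b^2$ together with $\|u\|^2\leqslant2\|\widehat b'-b_0'\|^2+2\|\widehat b_m'-b_0'\|^2$, the $\tfrac12\|\widehat b'-b_0'\|^2$ contribution is absorbed into the left-hand side, leaving
\begin{displaymath}
\tfrac12\|\widehat b'-b_0'\|^2\leqslant\tfrac32\|\widehat b_m'-b_0'\|^2+4V_{m\vee\widehat m}+{\rm pen}(m)-{\rm pen}(\widehat m).
\end{displaymath}
Then I split $V_{m\vee\widehat m}\leqslant(V_{m\vee\widehat m}-p(m\vee\widehat m))_{+}+p(m\vee\widehat m)$ with a deterministic $p(m')=\kappa m'\mathfrak R_N$, $\kappa=4\mathfrak c_{\mathcal H}/\mathfrak c_R$; since $p(m\vee\widehat m)\leqslant\kappa m\mathfrak R_N+\kappa\widehat m\mathfrak R_N$ and ${\rm pen}(m)=\mathfrak c_{\rm cal}m\mathfrak R_N$, choosing $\mathfrak c_{\rm cal}\geqslant4\kappa$ makes the term $4\kappa\widehat m\mathfrak R_N$ cancel against ${\rm pen}(\widehat m)$.

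The crux, and the main obstacle, is bounding $\mathbb E[(V_{m'}-p(m'))_{+}]$ uniformly enough to sum over the random index $m\vee\widehat m$. Here the Gaussian assumption \eqref{noise_covariance_condition_Gaussian_1} is essential: $V_{m'}=(\sup_{\|\tau\|\leqslant1,\,\tau\in\mathcal S_{m'}}\nu(\tau))^2$ is the square of the supremum of the centered Gaussian process $\tau\mapsto\nu(\tau)$. By \eqref{risk_bound_estimator_b_0'_Gaussian_3} and \eqref{noise_covariance_condition_Gaussian_2} its mean satisfies $\mathbb E[V_{m'}^{1/2}]\leqslant(\mathbb E V_{m'})^{1/2}\leqslant(\tfrac{\mathfrak c_{\mathcal H}}{\mathfrak c_R}m'\mathfrak R_N)^{1/2}$ and its variance proxy is $\sup_{\|\tau\|\leqslant1}{\rm Var}(\nu(\tau))\leqslant\tfrac{\mathfrak c_{\mathcal H}}{\mathfrak c_R}\mathfrak R_N$. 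The Borell--TIS concentration inequality then yields $\mathbb P(V_{m'}^{1/2}>t)\leqslant\exp(-\tfrac{(t-(\tfrac{\mathfrak c_{\mathcal H}}{\mathfrak c_R}m'\mathfrak R_N)^{1/2})^2}{2\mathfrak c_{\mathcal H}\mathfrak R_N/\mathfrak c_R})$; integrating the tail from $t=(\kappa m'\mathfrak R_N)^{1/2}\geqslant2\mathbb E[V_{m'}^{1/2}]$ gives $\mathbb E[(V_{m'}-p(m'))_{+}]\lesssim\mathfrak R_N\,e^{-m'/2}$, so that $\sum_{m'=1}^N\mathbb E[(V_{m'}-p(m'))_{+}]\lesssim\mathfrak R_N$ after summing the geometric series. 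Taking expectations in the last display, using $\mathbb E\|\widehat b_m'-b_0'\|^2\leqslant\min_{\tau\in\mathcal S_m}\|\tau-b_0'\|^2+\tfrac{\mathfrak c_{\mathcal H}}{\mathfrak c_R}m\mathfrak R_N$ from Proposition \ref{risk_bound_estimator_b_0'_Gaussian}, and bounding the leftover $4\kappa m\mathfrak R_N+{\rm pen}(m)\lesssim m\mathfrak R_N$, I obtain $\mathbb E\|\widehat b'-b_0'\|^2\lesssim\min_{\tau\in\mathcal S_m}\|\tau-b_0'\|^2+m\mathfrak R_N+\mathfrak R_N$ for every $m$; minimizing over $m\in\mathcal M_N$ concludes.
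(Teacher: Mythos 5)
Your proof is correct, and while its skeleton (contrast decomposition, absorption of $\tfrac14\|\widehat b'-\widehat b_m'\|^2$ into the left-hand side, splitting the supremum term into a centered part plus a deterministic threshold $p$ absorbed by the penalty) is the same oracle-inequality argument as in the paper, the key concentration step is genuinely different. The paper controls $\mathbb E(\mathcal E(m,m^\star))$, i.e.\ the excess of $\bigl[\sup_{\tau\in\mathcal S_{m\vee m^\star},\|\tau\|=1}|\nu_N(\tau)|\bigr]^2$ over its threshold, by first deriving a pointwise Gaussian tail bound $\mathbb P(|\nu_N(\tau)|>\varepsilon)\leqslant\mathfrak c_1\exp(-\mathfrak c_2N^2\varepsilon^2/(\|\tau\|^2\sum_{i,k}\Gamma^{i,k}))$ and then invoking, as a black box, the $\mathbb L^2$-ball chaining technique of Baraud, Comte and Viennet (Proposition 6.1 of \cite{BCV01}) to pass from fixed $\tau$ to the supremum. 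You instead observe that $V_{m'}^{1/2}=\sup_{\tau\in\mathcal S_{m'},\|\tau\|\leqslant1}\nu_N(\tau)$ is itself the supremum of a centered Gaussian process, bound its mean by Jensen via $\mathbb E(V_{m'})\leqslant\tfrac{\mathfrak c_{\mathcal H}}{\mathfrak c_R}m'\mathfrak R_N$ (the computation (\ref{risk_bound_estimator_b_0'_Gaussian_3})) and its variance proxy by $\tfrac{\mathfrak c_{\mathcal H}}{\mathfrak c_R}\mathfrak R_N$, and apply the Borell--TIS inequality; integrating the tail above $p(m')=4\tfrac{\mathfrak c_{\mathcal H}}{\mathfrak c_R}m'\mathfrak R_N$ indeed yields $\mathbb E[(V_{m'}-p(m'))_+]\lesssim\mathfrak R_Ne^{-m'/2}$, which sums over $m'\leqslant N$ exactly as the paper's Step 3 does. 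Your route buys self-containedness: given the Gaussian structure already assumed in (\ref{noise_covariance_condition_Gaussian_1}), Borell--TIS replaces the cited chaining lemma with a one-line concentration bound, and it also makes explicit the calibration constraint $\mathfrak c_{\rm cal}\geqslant16\mathfrak c_{\mathcal H}/\mathfrak c_R$ that the paper leaves implicit in its appeal to \cite{BCV01} (the paper only says $\mathfrak c_{\rm cal}$ is ``to calibrate''). What the paper's chaining approach buys in exchange is robustness: it needs only the sub-Gaussian tail of each $\nu_N(\tau)$, not that the whole indexed family be a Gaussian process, so it would survive weakenings of the Gaussianity assumption under which Borell--TIS is unavailable.
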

%

% Proof.

%
\begin{proof}
The proof of Theorem \ref{risk_bound_adaptive_estimator_b_0'_Gaussian} is dissected in four steps.
\\
\\
{\bf Step 1.} First, for any $m\in\mathcal M_N$ and every $\tau,\tau^{\star}\in\mathcal S_{m\vee\widehat m}$,
\begin{eqnarray*}
 \gamma_N(\tau) -\gamma_N(\tau^{\star}) & = &
 \|\tau - b_0' + b_0'\|^2 - 2\mathcal I(\tau,\overline X) -
 (\|\tau^{\star} - b_0' + b_0'\|^2 - 2\mathcal I(\tau^{\star},\overline X))\\
 & = &
 \|\tau - b_0'\|^2 -\|\tau^{\star} - b_0'\|^2 +
 2\langle\tau -\tau^{\star},b_0'\rangle - 2\mathcal I(\tau -\tau^{\star},\overline X)\\
 & = &
 \|\tau - b_0'\|^2 -\|\tau^{\star} - b_0'\|^2 - 2\nu_N(\tau -\tau^{\star}),
\end{eqnarray*}
where
\begin{displaymath}
\nu_N(\varphi) =\mathcal I(\varphi,\overline Z)
\textrm{ $;$ }\forall\varphi\in\mathcal S_{m\vee\widehat m}.
\end{displaymath}
Then, by the definition of $\widehat m$,
\begin{eqnarray*}
 \|\widehat b' - b_0'\|^2
 & \leqslant &
 \|\widehat b_m' - b_0'\|^2 +
 2\nu_N(\widehat b' -\widehat b_m') + {\rm pen}(m) - {\rm pen}(\widehat m)\\
 & \leqslant &
 \|\widehat b_m' - b_0'\|^2 +\frac{1}{4}\|\widehat b' -\widehat b_m'\|^2 +
 4\mathcal E(m,\widehat m) + p(m,\widehat m) + {\rm pen}(m) - {\rm pen}(\widehat m)
\end{eqnarray*}
where, for every $m^{\star}\in\mathcal M_N$,
\begin{displaymath}
\mathcal E(m,m^{\star}) =
\left(\left[
\sup_{\tau\in\mathcal S_{m\vee m^{\star}} :\|\tau\| = 1}|\nu_N(\tau)|\right]^2 - p(m,m^{\star})\right)_+
\quad {\rm and}\quad
p(m,m^{\star}) =
\frac{\mathfrak c_{\rm cal}}{4}\cdot\frac{m\vee m^{\star}}{N^2}\sum_{i,k = 1}^{N}\Gamma^{i,k}.
\end{displaymath}
By using that $(a + b)^2\leqslant 2a^2 + 2b^2$ for every $a,b\in\mathbb R$, and since $4p(m,\widehat m)\leqslant {\rm pen}(m) + {\rm pen}(\widehat m)$,
\begin{equation}\label{risk_bound_adaptive_estimator_b_0'_Gaussian_1}
\|\widehat b' - b_0'\|^2
\leqslant
3\|\widehat b_m' - b_0'\|^2 + 8\mathcal E(m,\widehat m) + 4{\rm pen}(m).
\end{equation}
{\bf Step 2.} Consider $\tau\in\mathcal S_N$, and note that
\begin{displaymath}
\nu_N(\tau) =
\frac{1}{N}\sum_{i = 1}^{N}\int_{0}^{T}\tau(s)dZ_{s}^{i}
\rightsquigarrow\mathcal N(0,\sigma_{N}^{2}(\tau)),
\end{displaymath}
where
\begin{eqnarray*}
 \sigma_{N}^{2}(\tau) & = &
 \frac{1}{N^2}\sum_{i,k = 1}^{N}
 \mathbb E\left(\left(\int_{0}^{T}\tau(s)dZ_{s}^{i}\right)
 \left(\int_{0}^{T}\tau(s)dZ_{s}^{k}\right)\right)\\
 & = &
 \frac{1}{N^2}\left(\int_{[0,T]^2}\tau(s)\tau(t)dR(s,t)\right)
 \sum_{i,k = 1}^{N}\Gamma_{\star}^{i,k} =
 \frac{\|\tau\|_{R}^{2}}{N^2}\sum_{i,k = 1}^{N}\Gamma_{\star}^{i,k}
 \quad\textrm{by Proposition \ref{Young_Wiener_isometry}.}
\end{eqnarray*}
For any $\varepsilon > 0$,
\begin{eqnarray*}
 \mathbb P(|\nu_N(\tau)| >\varepsilon)
 & = &
 2\mathbb P(\sigma_N(\tau)\xi >\varepsilon)
 \quad {\rm with}\quad
 \xi\rightsquigarrow\mathcal N(0,1)\\
 & = &
 \frac{2}{\sqrt{2\pi}}
 \int_{\frac{\varepsilon}{\sigma_N(\tau)}}^{\infty}
 \exp\left(-\frac{x^2}{2}\right)dx
 \leqslant
 \exp\left(
 -\frac{1}{4}\cdot\frac{\varepsilon^2}{\sigma_{N}^{2}(\tau)}\right)
 \underbrace{
 \frac{2}{\sqrt{2\pi}}\int_{-\infty}^{\infty}\exp\left(-\frac{x^2}{4}\right)dx}_{=:\mathfrak c_1}.
\end{eqnarray*}
Moreover, (\ref{noise_covariance_condition_Gaussian_1}) and (\ref{matrix_Gamma_Gaussian}) - respectively - lead to
\begin{displaymath}
\|\tau\|_{R}^{2}
\leqslant
\mathfrak c_R\|\tau\|^2
\quad {\rm and}\quad
\left|\sum_{i,k = 1}^{N}\Gamma_{\star}^{i,k}\right|
\leqslant
\frac{1}{\gamma_R}
\sum_{i,k = 1}^{N}\Gamma^{i,k}.
\end{displaymath}
Then,
\begin{equation}\label{risk_bound_adaptive_estimator_b_0'_Gaussian_2}
\mathbb P(|\nu_N(\tau)| >\varepsilon)
\leqslant
\mathfrak c_1
\exp\left(-\mathfrak c_2
\frac{N^2\varepsilon^2}{\|\tau\|^2
\sum_{i,k = 1}^{N}\Gamma^{i,k}}\right)\\
\quad {\rm with}\quad
\mathfrak c_2 =\frac{\gamma_R}{4\mathfrak c_R}.
\end{equation}
{\bf Step 3.} By using Inequality (\ref{risk_bound_adaptive_estimator_b_0'_Gaussian_2}), and by following the pattern of the proof of Baraud et al. \cite{BCV01}, Proposition 6.1, the purpose of this step is to find a suitable bound on $\mathbb E(\mathcal E(m,m^{\star}))$ ($m,m^{\star}\in\mathcal M_N$). Consider $\delta_0\in (0,1)$ and let $(\delta_n)_{n\in\mathbb N^*}$ be the real sequence defined by
\begin{displaymath}
\delta_n :=\delta_02^{-n}
\textrm{ $;$ }
\forall n\in\mathbb N^*.
\end{displaymath}
Since $\mathcal S_{m\vee m^{\star}}$ is a vector subspace of $\mathbb L^2([0,T],dt)$ of dimension $m\vee m^{\star}$, by Lorentz et al. \cite{LGM96}, Chapter 15, Proposition 1.3, for any $n\in\mathbb N$, there exists $T_n\subset\mathcal B_{m,m^{\star}} =\{\tau\in\mathcal S_{m\vee m^{\star}} :\|\tau\| = 1\}$ such that $|T_n|\leqslant (3/\delta_n)^{m\vee m^{\star}}$ and, for any $\tau\in\mathcal B_{m,m^{\star}}$,
\begin{displaymath}
\exists f_n\in T_n :
\|\tau - f_n\|\leqslant\delta_n.
\end{displaymath}
In particular, note that
\begin{displaymath}
\tau =
f_0 +\sum_{n = 1}^{\infty}(f_n - f_{n - 1}).
\end{displaymath}
Then, for any sequence $(\Delta_n)_{n\in\mathbb N}$ of elements of $(0,\infty)$ such that $\Delta :=\sum_{n\in\mathbb N}\Delta_n <\infty$,
\begin{eqnarray*}
 & & \mathbb P\left(\left[
 \sup_{\tau\in\mathcal B_{m,m^{\star}}}
 |\nu_N(\tau)|\right]^2 >
 \Delta^2\right)\\
 & &
 \hspace{2cm} =
 \mathbb P\left(\exists (f_n)_{n\in\mathbb N}\in\prod_{n = 0}^{\infty}T_n :
 |\nu_N(f_0)| +\sum_{n = 1}^{\infty}
 |\nu_N(f_n - f_{n - 1})| >\Delta\right)\\
 & &
 \hspace{2cm}\leqslant
 \mathbb P\left(\exists (f_n)_{n\in\mathbb N}\in\prod_{n = 0}^{\infty}T_n :
 |\nu_N(f_0)| >\Delta_0\textrm{ or }[\exists n\in\mathbb N^* :
 |\nu_N(f_n - f_{n - 1})| >\Delta_n]\right)\\
 & &
 \hspace{2cm}\leqslant
 \sum_{f_0\in T_0}\mathbb P(|\nu_N(f_0)| >\Delta_0) +
 \sum_{n = 1}^{\infty}\sum_{(f_{n - 1},f_n)\in\mathbb T_n}
 \mathbb P(|\nu_N(f_n - f_{n - 1})| >\Delta_n)
\end{eqnarray*}
with $\mathbb T_n = T_{n - 1}\times T_n$ for every $n\in\mathbb N^*$. Moreover, $\|f_0\|^2\leqslant 1$ and, for every $n\in\mathbb N^*$,
\begin{displaymath}
\|f_n - f_{n - 1}\|^2
\leqslant
2\delta_{n - 1}^{2} + 2\delta_{n}^{2} =\frac{5}{2}\delta_{n - 1}^{2}.
\end{displaymath}
So, by Inequality (\ref{risk_bound_adaptive_estimator_b_0'_Gaussian_2}),
\begin{eqnarray}
 \mathbb P\left(\left[
 \sup_{\tau\in\mathcal B_{m,m^{\star}}}
 |\nu_N(\varphi)|\right]^2 >\Delta^2\right)
 & \leqslant &
 \mathfrak c_1\sum_{f_0\in T_0}\exp\left(
 -\frac{\mathfrak c_2\Delta_{0}^{2}}{\|f_0\|^2\mathfrak R_N}\right)
 \nonumber\\
 & &
 \hspace{2cm} +
 \mathfrak c_1\sum_{n = 1}^{\infty}\sum_{(f_{n - 1},f_n)\in\mathbb T_n}
 \exp\left(
 -\frac{\mathfrak c_2\Delta_{n}^{2}}{\|f_n - f_{n - 1}\|^2\mathfrak R_N}\right)
 \nonumber\\
 \label{risk_bound_adaptive_estimator_b_0'_Gaussian_4}
 & \leqslant &
 \mathfrak c_1\exp\left(h_0 -\frac{\mathfrak c_2\Delta_{0}^{2}}{
 \mathfrak R_N}\right) +
 \mathfrak c_1\sum_{n = 1}^{\infty}\exp\left(h_{n - 1} + h_n -
 \frac{\mathfrak c_3\Delta_{n}^{2}}{\delta_{n - 1}^{2}
 \mathfrak R_N}\right)
\end{eqnarray}
with $\mathfrak c_3 = 2\mathfrak c_2/5$, and $h_n =\log(|T_n|)$ for every $n\in\mathbb N$. Now, let us take $\Delta_0$ such that
\begin{displaymath}
h_0 -\frac{\mathfrak c_2\Delta_{0}^{2}}{\mathfrak R_N} = -(m\vee m^{\star} + x)
\quad {\rm with}\quad x > 0,
\end{displaymath}
which leads to
\begin{displaymath}
\Delta_0 =
\left(\frac{\mathfrak R_N}{\mathfrak c_2}(m\vee m^{\star} + x + h_0)\right)^{\frac{1}{2}}\\
\leqslant
\left(\frac{\mathfrak R_N}{\mathfrak c_2}(1 + h_0)(m\vee m^{\star} + x)\right)^{\frac{1}{2}},
\end{displaymath}
and for every $n\in\mathbb N^*$, let us take $\Delta_n$ such that
\begin{displaymath}
h_{n - 1} + h_n -
\frac{\mathfrak c_3\Delta_{n}^{2}}{\delta_{n - 1}^{2}
\mathfrak R_N} =
-(m\vee m^{\star} + x + n),
\end{displaymath}
which leads to
\begin{eqnarray*}
 \Delta_n & = &
 \left(\frac{\delta_{n - 1}^{2}\mathfrak R_N}{\mathfrak c_3}
 (m\vee m^{\star} + x + h_{n - 1} + h_n + n)\right)^{\frac{1}{2}}\\
 & \leqslant &
 \left(\frac{\delta_{n - 1}^{2}\mathfrak R_N}{\mathfrak c_3}
 (1 +\lambda_n + n)(m\vee m^{\star} + x)\right)^{\frac{1}{2}}
 \quad {\rm with}\quad
 \lambda_n =
 2\log\left(\frac{3}{\delta_0}\right) + (2n - 1)\log(2).
\end{eqnarray*}
For this appropriate sequence $(\Delta_n)_{n\in\mathbb N}$,
\begin{displaymath}
\mathbb P\left(\left[
\sup_{\tau\in\mathcal B_{m,m^{\star}}}
|\nu_N(\tau)|\right]^2 >\Delta^2\right)
\leqslant
\mathfrak c_1e^{-x}e^{-(m\vee m^{\star})}\left(1 +\sum_{n = 1}^{\infty}e^{-n}\right)
\leqslant
1.6\mathfrak c_1e^{-x}e^{-(m\vee m^{\star})}
\end{displaymath}
by Inequality \eqref{risk_bound_adaptive_estimator_b_0'_Gaussian_4}, and
\begin{eqnarray*}
 \Delta^2 & \leqslant &
 \mathfrak R_N
 \left(\mathfrak c_{2}^{-\frac{1}{2}}(1 + h_0)^{\frac{1}{2}}(m\vee m^{\star} + x)^{\frac{1}{2}} +
 \mathfrak c_{3}^{-\frac{1}{2}}\sum_{n = 1}^{\infty}
 \delta_{n - 1}(1 +\lambda_n + n)^{\frac{1}{2}}(m\vee m^{\star} + x)^{\frac{1}{2}}\right)^2\\
 & \leqslant &
 \mathfrak R_N(m\vee m^{\star} + x)\delta^2
\end{eqnarray*}
with
\begin{equation}
\label{risk_bound_adaptive_estimator_b_0'_Gaussian_3}
\delta =
\mathfrak c_{2}^{-\frac{1}{2}}(1 + h_0)^{\frac{1}{2}} +
\mathfrak c_{3}^{-\frac{1}{2}}
\sum_{n = 1}^{\infty}\delta_{n - 1}(1 +\lambda_n + n)^{\frac{1}{2}} <\infty.
\end{equation}
So,
\begin{displaymath}
\mathbb P\left(\left[
\sup_{\tau\in\mathcal B_{m,m^{\star}}}
|\nu_N(\tau)|\right]^2 -
\frac{4\delta^2}{\mathfrak c_{\rm cal}}p(m,m^{\star}) >
\mathfrak R_N\delta^2x\right)
\leqslant
1.6\mathfrak c_1e^{-x}e^{-(m\vee m^{\star})}
\end{displaymath}
and then, by taking $\mathfrak c_{\rm cal} > 4\delta^2$ and $y =\mathfrak R_N\delta^2x$,
\begin{displaymath}
\mathbb P\left(\left[
\sup_{\tau\in\mathcal B_{m,m^{\star}}}
|\nu_N(\tau)|\right]^2 - p(m,m^{\star}) > y\right)
\leqslant
1.6\mathfrak c_1\exp\left(-\frac{y}{\mathfrak R_N\delta^2}\right)e^{-(m\vee m^{\star})}.
\end{displaymath}
Therefore,
\begin{eqnarray}
 \mathbb E(\mathcal E(m,m^{\star}))
 & = &
 \int_{0}^{\infty}\mathbb P\left(\left[
 \sup_{\tau\in\mathcal B_{m,m^{\star}}}
 |\nu_N(\tau)|\right]^2 - p(m,m^{\star}) > y\right)dy
 \nonumber\\
 \label{risk_bound_adaptive_estimator_b_0'_Gaussian_5}
 & \leqslant &
 \mathfrak c_4
 \frac{e^{-(m\vee m^{\star})}}{N^2}\sum_{i,k = 1}^{N}\Gamma^{i,k}
 \quad {\rm with}\quad
 \mathfrak c_4 = 1.6\mathfrak c_1\delta^2.
\end{eqnarray}
{\bf Step 4 (conclusion).} By Inequality (\ref{risk_bound_adaptive_estimator_b_0'_Gaussian_5}), there exists a constant $\mathfrak c_5 > 0$, not depending on $N$, such that for every $m\in\mathcal M_N$,
\begin{eqnarray*}
 \mathbb E(\mathcal E(m,\widehat m))
 & \leqslant &
 \sum_{m^{\star}\in\mathcal M_N}\mathbb E(\mathcal E(m,m^{\star}))\\
 & \leqslant &
 \frac{\mathfrak c_4}{N^2}\left(\sum_{m^{\star} = 1}^{N}e^{-(m\vee m^{\star})}\right)
 \sum_{i,k = 1}^{N}\Gamma^{i,k}
 \leqslant
 \frac{\mathfrak c_5}{N^2}\sum_{i,k = 1}^{N}\Gamma^{i,k}.
\end{eqnarray*}
So, by Inequality (\ref{risk_bound_adaptive_estimator_b_0'_Gaussian_1}),
\begin{displaymath}
\mathbb E(\|\widehat b' - b_0'\|^2)
\leqslant
\min_{m\in\mathcal M_N}\{3\mathbb E(\|\widehat b_m' - b_0'\|^2) + 4{\rm pen}(m)\} +
\frac{8\mathfrak c_5}{N^2}\sum_{i,k = 1}^{N}\Gamma^{i,k},
\end{displaymath}
and then Proposition \ref{risk_bound_estimator_b_0'_Gaussian} allows to conclude.
\end{proof}
%

% Subsection : Back to the models of Section section_examples.

%
\subsection{Back to the models of Section \ref{section_examples}}\label{section_estimation_examples}
First, let us apply Proposition \ref{risk_bound_estimator_b_0} in Models (\ref{linear_SDE_correlated_noises_transformed}), (\ref{linear_fDE_random_effect_transformed}), (\ref{interacting_particle_system_transformed}) and (\ref{long_time_observation_copies_fractional_model}).
%

% Corollary : Risk bound on the estimator of b_0 for linear (fractional) diffusions.

%
\begin{corollary}\label{risk_bound_estimator_b_0_linear_fractional_diffusions}
If $X^1,\dots,X^N$ are defined by (\ref{linear_SDE_correlated_noises_transformed}), then
\begin{displaymath}
\mathbb E(\|\widehat b - b_0\|^2)
\leqslant
\frac{(\sigma T)^2}{N^2}\sum_{i,k = 1}^{N}|R^{i,k}|,
\end{displaymath}
and if $X^1,\dots,X^N$ are defined by (\ref{linear_fDE_random_effect_transformed}), then
\begin{displaymath}
\mathbb E(\|\widehat b - b_0\|^2)
\leqslant\frac{\Sigma^2}{N}
\quad\textrm{with}\quad
\Sigma =\sqrt{T(\sigma_{\phi}^{2}T^2 +\sigma^2T^{2H})}.
\end{displaymath}
\end{corollary}
%

% Corollary : Risk bound on the estimator of b_0 for an interacting particle system.

%
\begin{corollary}\label{risk_bound_estimator_b_0_linteracting_particle_system}
Assume that $X^1,\dots,X^N$ are defined by (\ref{interacting_particle_system_transformed}), and consider
\begin{displaymath}
\widehat{\tt b}(t) =
\widehat b(t) -
\int_{0}^{t}e^{-(t - s)}\widehat b(s)ds
\textrm{ $;$ }t\in [0,T].
\end{displaymath}
Then,
\begin{displaymath}
\mathbb E(\|\widehat b - b_0\|^2)\leqslant
\frac{4T(1\vee T)^3\sigma^2}{N}
\quad\textrm{and}\quad
\mathbb E(\|\widehat{\tt b} - {\tt b}_0\|^2)\leqslant
\frac{8T(1 + T^2)(1\vee T)^3\sigma^2}{N}.
\end{displaymath}
\end{corollary}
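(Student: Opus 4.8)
The plan is to treat the two bounds in turn, both reducing to Proposition~\ref{risk_bound_estimator_b_0}. For the first bound, I would start from Proposition~\ref{matrix_Gamma_interacting_particle_system}: since
$$\left|\int_0^T\mathbb E(Z_s^iZ_s^k)ds\right|\leqslant T\sup_{t\in[0,T]}|\mathbb E(Z_t^iZ_t^k)|\leqslant\overline\Gamma^{i,k},$$
the copies $Z^1,\dots,Z^N$ satisfy the condition (\ref{noise_covariance_condition}) with $\Gamma^{i,k}=\overline\Gamma^{i,k}$. Proposition~\ref{risk_bound_estimator_b_0} then gives $\mathbb E(\|\widehat b-b_0\|^2)\leqslant\mathfrak R_N$ with $\mathfrak R_N=N^{-2}\sum_{i,k}\overline\Gamma^{i,k}$. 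The double sum is routine: $\sum_{i,k}\mathbf 1_{i=k}=N$ while $\sum_{i,k}3/N=3N$, so $\sum_{i,k}\overline\Gamma^{i,k}=4NT(1\vee T)^3\sigma^2$ and $\mathfrak R_N=4T(1\vee T)^3\sigma^2/N$, which is the first inequality.

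For the second bound, the key observation is that $\widehat{\tt b}$ is obtained from $\widehat b$ by the linear operator $L:g\mapsto g-\int_0^\cdot e^{-(\cdot-s)}g(s)ds$, and that $L$ inverts the relation $b_0={\tt b}_0+\int_0^\cdot{\tt b}_0$. Indeed, setting $G=\int_0^\cdot{\tt b}_0$ one has $G(0)=0$ and $G'+G=b_0$; solving this first-order linear ODE by the integrating factor gives $G(t)=\int_0^t e^{-(t-s)}b_0(s)ds$, hence ${\tt b}_0=G'=b_0-\int_0^\cdot e^{-(\cdot-s)}b_0=L(b_0)$. By linearity of $L$ it follows that $\widehat{\tt b}-{\tt b}_0=L(\widehat b-b_0)=L(D)$, where I write $D:=\widehat b-b_0$.

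It then remains to control $L$ on $\mathbb L^2([0,T],dt)$. Writing $|L(D)(t)|^2\leqslant 2|D(t)|^2+2|\int_0^t e^{-(t-s)}D(s)ds|^2$, bounding $e^{-(t-s)}\leqslant 1$ and applying Cauchy--Schwarz give $|\int_0^t e^{-(t-s)}D(s)ds|\leqslant\|D\|_1\leqslant\sqrt T\|D\|$, so after integrating over $t\in[0,T]$ one obtains $\|L(D)\|^2\leqslant 2\|D\|^2+2T^2\|D\|^2=2(1+T^2)\|D\|^2$. Taking expectations and inserting the first bound yields $\mathbb E(\|\widehat{\tt b}-{\tt b}_0\|^2)\leqslant 2(1+T^2)\cdot 4T(1\vee T)^3\sigma^2/N$, which is exactly the claimed second inequality.

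The main obstacle is the second step: one has to recognize that $\widehat{\tt b}=L(\widehat b)$ and that $L$ is precisely the inverse of the integral transform sending ${\tt b}_0$ to $b_0$, so that the estimation error $\widehat{\tt b}-{\tt b}_0$ is just $L$ applied to the already-controlled error $\widehat b-b_0$. Once this algebraic identity is established through the ODE $G'+G=b_0$, everything reduces to an elementary operator-norm estimate, and the constants are pinned down exactly by the $e^{-(t-s)}\leqslant 1$ and Cauchy--Schwarz bounds.
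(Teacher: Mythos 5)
Your proposal is correct and follows essentially the same route as the paper: reduce the first bound to Proposition \ref{risk_bound_estimator_b_0} via Proposition \ref{matrix_Gamma_interacting_particle_system}, then transfer it to $\widehat{\tt b}$ through the identity ${\tt b}_0(t) = b_0(t) - \int_{0}^{t}e^{-(t - s)}b_0(s)ds$ and the elementary bound $\|L(D)\|^2\leqslant 2(1 + T^2)\|D\|^2$. The only cosmetic difference is that you obtain the key identity by solving the ODE $G' + G = b_0$ for the antiderivative $G=\int_{0}^{\cdot}{\tt b}_0$, whereas the paper solves ${\tt b}_0' + {\tt b}_0 = b_0'$ for ${\tt b}_0$ and then integrates by parts; both yield the same formula and the same constants.
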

%

% Proof.

%
\begin{proof}
First, by Proposition \ref{matrix_Gamma_interacting_particle_system},
\begin{displaymath}
\sum_{i,k = 1}^{N}\overline\Gamma^{i,k} =
T(1\vee T)^3\sigma^2\sum_{i,k = 1}^{N}\left(\mathbf 1_{i = k} +\frac{3}{N}\right) =
4T(1\vee T)^3\sigma^2N.
\end{displaymath}
Then, by Proposition \ref{risk_bound_estimator_b_0},
\begin{displaymath}
\mathbb E(\|\widehat b - b_0\|^2)
\leqslant
\frac{4T(1\vee T)^3\sigma^2}{N}.
\end{displaymath}
Now, since the derivative $b_0'$ of $b_0$ satisfies $b_0' =\texttt b_0' +\texttt b_0$ with $\texttt b_0(0) = 0$, for every $t\in [0,T]$,
\begin{displaymath}
\texttt b_0(t) =
\int_{0}^{t}e^{-(t - s)}b_0'(s)ds =
b_0(t) -\int_{0}^{t}e^{-(t - s)}b_0(s)ds,
\end{displaymath}
leading to
\begin{eqnarray*}
 |\widehat{\tt b}(t) -\texttt b_0(t)|^2
 & = &
 \left|\widehat b(t) - b_0(t) -
 \int_{0}^{t}e^{-(t - s)}(\widehat b(s) - b_0(s))ds\right|^2\\
 & \leqslant &
 2|\widehat b(t) - b_0(t)|^2 +
 2T\int_{0}^{T}|\widehat b(s) - b_0(s)|^2ds.
\end{eqnarray*}
So,
\begin{displaymath}
\mathbb E(\|\widehat{\tt b} -\texttt b_0\|^2)
\leqslant
2(1 + T^2)\mathbb E(\|\widehat b - b_0\|^2)
\leqslant
\frac{8T(1 + T^2)(1\vee T)^3\sigma^2}{N}.
\end{displaymath}
\end{proof}
%

% Corollary : Risk bound on the estimator of b_0 for a long-time observation of the drifted fBm.

%
\begin{corollary}\label{risk_bound_estimator_b_0_long_time_drifted_fBm}
If $X^1,\dots,X^N$ are defined by (\ref{long_time_observation_copies_fractional_model}) with $\Delta\geqslant T$, then
\begin{displaymath}
\mathbb E(\|\widehat b - b_0\|^2)
\lesssim
\frac{1}{N} +
(\Delta N)^{2H - 2}\mathbf 1_{H >\frac{1}{2}}.
\end{displaymath}
\end{corollary}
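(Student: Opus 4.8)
The plan is to feed the explicit matrix of Proposition \ref{matrix_Gamma_long_time_drifted_fBm} into the generic bound of Proposition \ref{risk_bound_estimator_b_0}. For the copies defined by (\ref{long_time_observation_copies_fractional_model}) one has $|\int_0^T\mathbb E(Z_s^iZ_s^k)ds|\leqslant T\sup_{s\in[0,T]}|\mathbb E(Z_s^iZ_s^k)|\leqslant\Gamma^{i,k}(H,\Delta)$, so condition (\ref{noise_covariance_condition}) holds with $\Gamma^{i,k}=\Gamma^{i,k}(H,\Delta)$ and Proposition \ref{risk_bound_estimator_b_0} gives $\mathbb E(\|\widehat b-b_0\|^2)\leqslant N^{-2}\sum_{i,k}\Gamma^{i,k}(H,\Delta)$. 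I would then split this sum into its diagonal and off-diagonal parts. The diagonal contributes $\mathfrak c_{\ref{matrix_Gamma_long_time_drifted_fBm},1}(H)N=T^{2H+1}N$, hence a term of order $1/N$ after division by $N^2$.

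The core of the argument is the off-diagonal sum $\sum_{i\neq k}|k-i|^{2H-2}$. Reindexing by $j=|k-i|$ and noting that each value $j\in\{1,\dots,N-1\}$ is attained by exactly $N-j\leqslant N$ ordered pairs, I would bound it by $2N\sum_{j=1}^{N-1}j^{2H-2}$. Three regimes then appear. When $H<1/2$ the series $\sum_{j\geqslant1}j^{2H-2}$ converges (since $2H-2<-1$), so the off-diagonal sum is $\lesssim N$; combined with $\Delta^{2H-2}\leqslant T^{2H-2}$ (valid because $2H-2<0$ and $\Delta\geqslant T$) this contributes only a further $1/N$ term. When $H=1/2$ the constant $\mathfrak c_{\ref{matrix_Gamma_long_time_drifted_fBm},2}(H)=4H|2H-1|T^3$ vanishes, so the off-diagonal part is identically zero. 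This is exactly why the indicator $\mathbf 1_{H>1/2}$ appears in the statement.

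In the remaining regime $H>1/2$ one has $2H-2\in(-1,0)$, so a comparison with $\int_1^{N}x^{2H-2}dx$ yields $\sum_{j=1}^{N-1}j^{2H-2}\lesssim N^{2H-1}$ and hence $\sum_{i\neq k}|k-i|^{2H-2}\lesssim N^{2H}$. Dividing by $N^2$ and multiplying by $\mathfrak c_{\ref{matrix_Gamma_long_time_drifted_fBm},2}(H)\Delta^{2H-2}$ produces a term of order $\Delta^{2H-2}N^{2H-2}=(\Delta N)^{2H-2}$, which, after collecting the constants depending only on $H$ and $T$, gives the announced bound. The computation is entirely elementary; the only genuine care required is the case analysis on $\mathrm{sign}(2H-1)$, which governs the convergence of $\sum_j j^{2H-2}$ and thereby decides whether the off-diagonal contribution is negligible (absorbed into $1/N$) or dominant (the $(\Delta N)^{2H-2}$ term). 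I do not anticipate any substantial obstacle beyond this bookkeeping.
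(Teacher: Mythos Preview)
Your proposal is correct and follows essentially the same route as the paper: invoke Proposition~\ref{matrix_Gamma_long_time_drifted_fBm} inside Proposition~\ref{risk_bound_estimator_b_0}, split into diagonal and off-diagonal parts, and control $\sum_{i\neq k}|k-i|^{2H-2}$ by the bound $2N\sum_{j=1}^{N}j^{2H-2}$ with a case analysis on $H$. The only cosmetic differences are that you use an integral comparison for $H>1/2$ where the paper writes the same estimate as a Riemann sum, and you treat $H=1/2$ explicitly via the vanishing of $\mathfrak c_{\ref{matrix_Gamma_long_time_drifted_fBm},2}(H)$ rather than through the indicator $\mathbf 1_{H\neq 1/2}$.
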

%

% Proof.

%
\begin{proof}
Consider $\mathfrak c_{\ref{matrix_Gamma_long_time_drifted_fBm}} =\max_{i = 1,2}\mathfrak c_{\ref{matrix_Gamma_long_time_drifted_fBm},i}(H)$. By Propositions \ref{matrix_Gamma_long_time_drifted_fBm} and \ref{risk_bound_estimator_b_0},
\begin{displaymath}
\mathbb E(\|\widehat b - b_0\|^2)
\leqslant
\frac{\mathfrak c_{\ref{matrix_Gamma_long_time_drifted_fBm}}}{N^2}
\left[N +\left(\Delta^{2H - 2}\sum_{i\neq k}|k - i|^{2H - 2}\right)\mathbf 1_{H\neq\frac{1}{2}}\right].
\end{displaymath}
If $H < 1/2$,
\begin{eqnarray*}
 \sum_{i\neq k}|k - i|^{2H - 2}
 & = &
 2\sum_{i = 1}^{N}\sum_{k = i + 1}^{N}(k - i)^{2H - 2}\\
 & \leqslant &
 2N\sum_{k = 1}^{N}k^{2H - 2}
 \leqslant
 \mathfrak c_1N
 \quad {\rm with}\quad
 \mathfrak c_1 =\sum_{n = 1}^{\infty}\frac{2}{n^{2 - 2H}} <\infty,
\end{eqnarray*}
and then
\begin{displaymath}
\mathbb E(\|\widehat b - b_0\|^2)
\leqslant
\frac{\mathfrak c_{\ref{matrix_Gamma_long_time_drifted_fBm}}}{N}
(1 +\mathfrak c_1T^{2H - 2}).
\end{displaymath}
If $H > 1/2$,
\begin{displaymath}
\sum_{i\neq k}|k - i|^{2H - 2}\leqslant
2N\sum_{k = 1}^{N}k^{2H - 2} =
2N^{2H}\cdot
\underbrace{\frac{1}{N}\sum_{k = 1}^{N}\left(\frac{k}{N}\right)^{2H - 2}}_{\sim (2H - 1)^{-1}},
\end{displaymath}
and then there exists a constant $\mathfrak c_2 > 0$, not depending on $m$, $N$ and $\Delta$, such that
\begin{displaymath}
\mathbb E(\|\widehat b - b_0\|^2)
\leqslant
\mathfrak c_{\ref{matrix_Gamma_long_time_drifted_fBm}}
\left(\frac{1}{N} +\mathfrak c_2(\Delta N)^{2H - 2}\right).
\end{displaymath}
This concludes the proof.
\end{proof}
%

% Remark : Remark about risk bound on the estimator of b_0 for a long-time observation of the drifted fBm.

%
\begin{remark}\label{remark_risk_bound_estimator_b_0_long_time_drifted_fBm}
Assume that $H > 1/2$ and $\Delta\geqslant T\vee N^{\frac{2H - 1}{2 - 2H}}$. By Corollary \ref{risk_bound_estimator_b_0_long_time_drifted_fBm},
\begin{displaymath}
\mathbb E(\|\widehat b - b_0\|^2)\lesssim\frac{1}{N}.
\end{displaymath}
\end{remark}
\noindent
Now, although Corollary \ref{risk_bound_estimator_b_0_linear_fractional_diffusions} provides suitable risk bounds on $\widehat b$ - the estimator of $b_0$ - in Models (\ref{linear_SDE_correlated_noises_transformed}) and (\ref{linear_fDE_random_effect_transformed}), the function of interest is that denoted by $\texttt b_0$. The conditions (\ref{noise_covariance_condition_Gaussian_2}) and (\ref{noise_covariance_condition_Gaussian_1}) (see Example \ref{example_noise_covariance_condition_Gaussian_1}) are satisfied in each of these models, and then one can deduce risk bounds on adaptive estimators of $\texttt b_0$ from Theorem \ref{risk_bound_adaptive_estimator_b_0'_Gaussian}. Since $b_0 =\int_{0}^{.}(\texttt b_0 -\frac{\sigma^2}{2})$ (resp. $b_0 =\int_{0}^{.}\texttt b_0$) in Model (\ref{linear_SDE_correlated_noises_transformed}) (resp. (\ref{linear_fDE_random_effect_transformed})), $\widehat{\tt b} =\widehat b' +\frac{\sigma^2}{2}$ (resp. $\widehat b'$) is an estimator of $\texttt b_0$.
%

% Corollary : Risk bound on the estimator of b_0' for linear (fractional) diffusions.

%
\begin{corollary}\label{risk_bound_estimator_b_0'_linear_fractional_diffusions}
If $X^1,\dots,X^N$ are defined by (\ref{linear_SDE_correlated_noises_transformed}), then
\begin{displaymath}
\mathbb E(\|\widehat{\tt b} - {\tt b}_0\|^2)
\lesssim
\min_{m\in\mathcal M_N}\left\{
\min_{\tau\in\mathcal S_m}\|\tau - {\tt b}_0\|^2 + m\mathcal R_N\right\} +
\mathcal R_N
\quad\textrm{with}\quad
\mathcal R_N =
\frac{1}{N^2}\sum_{i,k = 1}^{N}|R^{i,k}|,
\end{displaymath}
and if $X^1,\dots,X^N$ are defined by (\ref{linear_fDE_random_effect_transformed}) with $H\in (1/2,1)$, then
\begin{displaymath}
\mathbb E(\|\widehat b' - {\tt b}_0\|^2)
\lesssim
\min_{m\in\mathcal M_N}\left\{
\min_{\tau\in\mathcal S_m}\|\tau - {\tt b}_0\|^2 +\frac{m}{N}\right\} +\frac{1}{N}.
\end{displaymath}
\end{corollary}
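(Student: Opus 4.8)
The plan is to obtain both bounds directly from Theorem \ref{risk_bound_adaptive_estimator_b_0'_Gaussian}, once I have checked that each model satisfies its hypotheses and translated its conclusion from $b_0'$ to $\texttt b_0$.

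First I would verify the conditions (\ref{noise_covariance_condition_Gaussian_1}) and (\ref{noise_covariance_condition_Gaussian_2}) in both models. In Model (\ref{linear_SDE_correlated_noises_transformed}), $Z_t^i =\sigma W_t^i$ with $d\langle W^i,W^k\rangle_t = R^{i,k}dt$, so $\mathbb E(Z_s^iZ_t^k) =\sigma^2R^{i,k}(s\wedge t) = R^{i,k}R(s,t)$ with $R(s,t) =\sigma^2(s\wedge t)$; hence (\ref{noise_covariance_condition_Gaussian_1}) holds with $\Gamma_\star = R$ the correlation matrix, and (\ref{noise_covariance_condition_Gaussian_2}) follows from Example \ref{example_noise_covariance_condition_Gaussian_2}.(1). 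In Model (\ref{linear_fDE_random_effect_transformed}) with Gaussian random effects, $Z^1,\dots,Z^N$ are i.i.d., so $\mathbb E(Z_s^iZ_t^k) =\mathbf 1_{i = k}R(s,t)$ with $R(s,t) =\sigma_\phi^2st +\sigma^2R_H(s,t)$; this yields (\ref{noise_covariance_condition_Gaussian_1}) with $\Gamma_\star^{i,k} =\mathbf 1_{i = k}$, and the restriction $H\in (3/4,1)$ is precisely what guarantees $dR(s,t) =\rho(s,t)dsdt$ with $\rho\in\mathbb L^2([0,T]^2,dsdt)$, so that (\ref{noise_covariance_condition_Gaussian_2}) holds by Example \ref{example_noise_covariance_condition_Gaussian_2}.(2). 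Since $\mathcal S_1,\dots,\mathcal S_N$ are taken nested, Theorem \ref{risk_bound_adaptive_estimator_b_0'_Gaussian} then applies in each case.

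Next I would identify the relevant quantities and match the estimator to its target. For Model (\ref{linear_SDE_correlated_noises_transformed}), $\Gamma^{i,k}\asymp |R^{i,k}|$ gives $\mathfrak R_N\asymp\mathcal R_N$, and since $\widehat{\tt b} =\widehat b' +\frac{\sigma^2}{2}$ while $b_0' =\texttt b_0 -\frac{\sigma^2}{2}$, one has $\widehat{\tt b} -\texttt b_0 =\widehat b' - b_0'$, hence $\mathbb E(\|\widehat{\tt b} -\texttt b_0\|^2) =\mathbb E(\|\widehat b' - b_0'\|^2)$. For Model (\ref{linear_fDE_random_effect_transformed}), $\sum_{i,k}\Gamma^{i,k} = N\Sigma^2$ gives $\mathfrak R_N =\Sigma^2/N\asymp 1/N$, and here $b_0' =\texttt b_0$, so $\widehat b'$ estimates $\texttt b_0$ with no shift.

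The only step requiring care is rewriting the bias term from $b_0'$ to $\texttt b_0$ in the first model: since $\texttt b_0 = b_0' +\frac{\sigma^2}{2}$ differs from $b_0'$ by a constant, and the constant function belongs to each $\mathcal S_m$ (as for the trigonometric basis, whose first element $\varphi_1$ is constant), the translation $\tau\mapsto\tau +\frac{\sigma^2}{2}$ is a bijection of $\mathcal S_m$ and gives $\min_{\tau\in\mathcal S_m}\|\tau - b_0'\|^2 =\min_{\tau\in\mathcal S_m}\|\tau -\texttt b_0\|^2$. Plugging this together with $\mathfrak R_N\asymp\mathcal R_N$ into the conclusion of Theorem \ref{risk_bound_adaptive_estimator_b_0'_Gaussian} produces the first bound, while $b_0' =\texttt b_0$ and $\mathfrak R_N\asymp 1/N$ produce the second one immediately. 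The remaining manipulations are routine; the only genuine content is the verification of (\ref{noise_covariance_condition_Gaussian_2}), which is exactly what forces $H > 3/4$ in the fractional case.
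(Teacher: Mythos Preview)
Your proposal is correct and follows exactly the approach indicated by the paper: the paragraph preceding the corollary states that conditions (\ref{noise_covariance_condition_Gaussian_1}) and (\ref{noise_covariance_condition_Gaussian_2}) hold in both models (referring to Example \ref{example_noise_covariance_condition_Gaussian_2}), that Theorem \ref{risk_bound_adaptive_estimator_b_0'_Gaussian} then applies, and that the shift $\widehat{\tt b} =\widehat b' +\frac{\sigma^2}{2}$ (resp.\ $\widehat b'$) handles the passage from $b_0'$ to $\texttt b_0$. You have filled in the details faithfully, including the point---glossed over in the paper---that rewriting the bias term as $\min_{\tau\in\mathcal S_m}\|\tau -\texttt b_0\|^2$ in the first model requires the constant function to lie in each $\mathcal S_m$.
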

%

% Section : Numerical experiments.

%
\section{Numerical experiments}\label{section_numerical_experiments}
%

% Subsection : Estimation via \widehat b.

%
\subsection{Estimation in Models (\ref{interacting_particle_system}) and (\ref{long_time_observation_copies_fractional_model}) via $\widehat b$}\label{subsection_numerical_experiments_b_0}
First, $N = 100$ independent Brownian motions $W^1,\dots,W^N$ are simulated along the dissection $\{\frac{\ell T}{n}\textrm{ $;$ }\ell = 0,\dots,n\}$ of $[0,T]$ with $T = 1$ and $n = 150$, and so are the processes $Y^1,\dots,Y^N$ defined by (\ref{interacting_particle_system}) thanks to the Euler scheme with $Y_0 = 5$, $\sigma = 0.5$ and
\begin{displaymath}
\texttt b_0(t) = t^2
\textrm{ $;$ }\forall t\in [0,T].
\end{displaymath}
For any $i\in\{1,\dots,N\}$, the transformation (\ref{interacting_particle_system_transformed}) is applied to $Y^i$, leading to $X^i = b_0 + Z^i$ where, for every $t\in [0,T]$,
\begin{displaymath}
b_0(t) = t^2\left(1 +\frac{t}{3}\right)
\quad {\rm and}\quad
\frac{1}{\sigma}Z_{t}^{i} =
W_{t}^{i} +\frac{1}{N}\sum_{\ell = 1}^{N}\int_{0}^{t}W_{s}^{\ell}ds.
\end{displaymath}
Then, $b_0$ is estimated by $\widehat b =\overline X$, and $\texttt b_0$ by
\begin{displaymath}
\widehat{\tt b}(t) =\widehat b(t) -\int_{0}^{t}e^{-(t - s)}\widehat b(s)ds
\textrm{ $;$ }t\in [0,T].
\end{displaymath}
Figure \ref{plots_IPS} shows the true function $\texttt b_0$ (red line) and 5 estimations obtained thanks to $\widehat{\tt b}$ (dashed black lines). The experiment is repeated $100$ times, and both the mean and standard deviation of the integrated squared error (ISE) of $\widehat{\tt b}$ are very small: $8.7\cdot 10^{-4}$ and $4.4\cdot 10^{-4}$ respectively.
\begin{figure}[h!]
\includegraphics[width=0.5\textwidth]{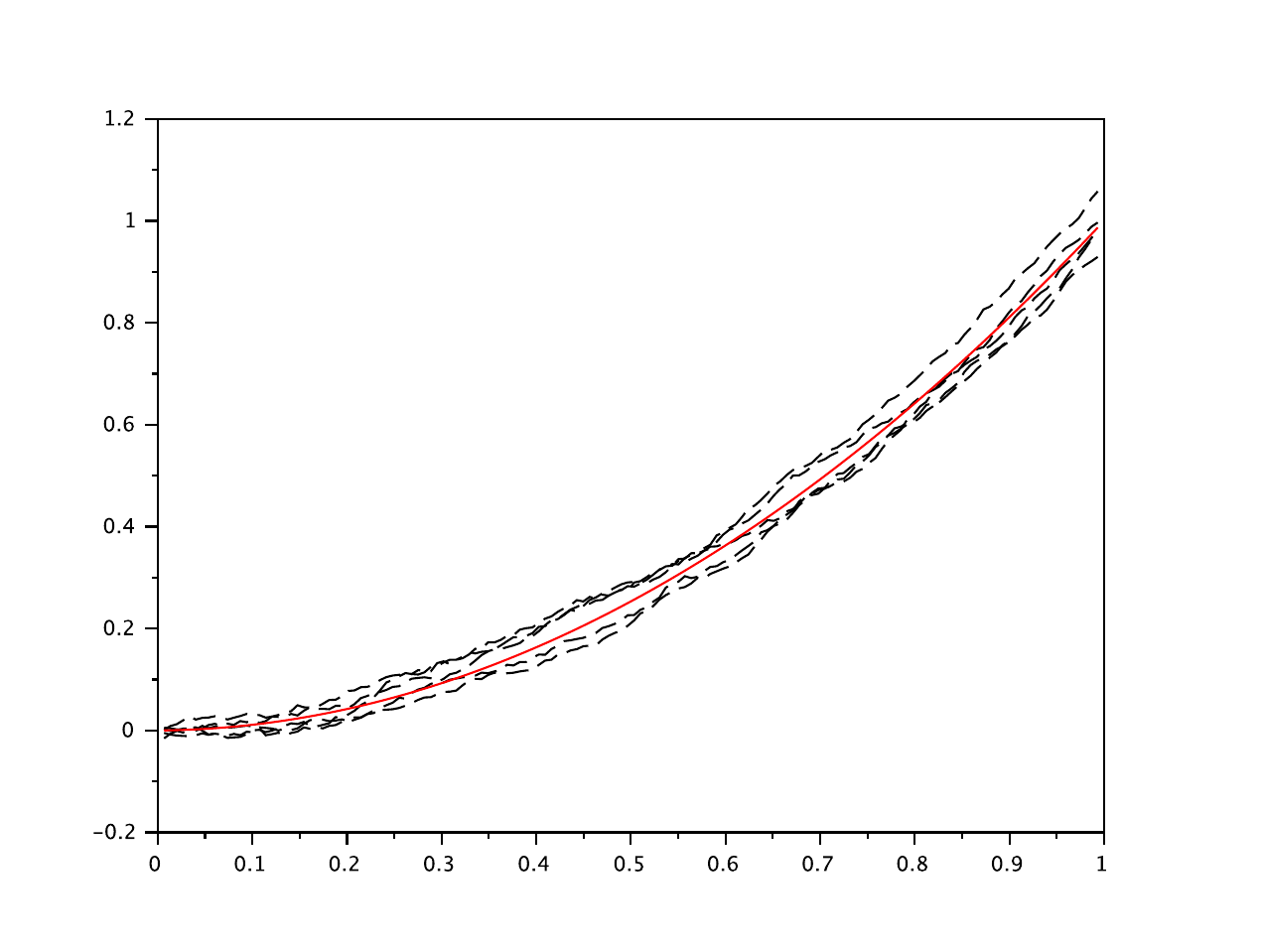}
\caption{Plots of $5$ estimations (dashed black) of the true function $\texttt b_0$ (red) in Model (\ref{interacting_particle_system}) ($N = 100$ copies).}
\label{plots_IPS}
\end{figure}
\newline
Now, consider $H\in (1/2,1)$, $T = 1$, $N = 50$, $\Delta =\delta T$ with $\delta\in\mathbb N^*$, and the $T$-periodic (and then $(T +\Delta)$-periodic) function $b_0$ such that
\begin{displaymath}
b_0(t) = t^2
\textrm{ $;$ }\forall t\in [0,T).
\end{displaymath}
Note that, for every $i\in\{1,\dots,N + 1\}$,
\begin{displaymath}
t_i(\Delta) = (i - 1)(T +\Delta) = (i - 1)(1 +\delta)T.
\end{displaymath}
For $n = 50$, the fractional Brownian motion $B$, and then $X = b_0 + Z$ with $Z =\sigma B$, are simulated along the dissection
\begin{displaymath}
\bigcup_{i = 1}^{N}
\left\{t_i(\Delta) +\frac{\ell T}{n}\textrm{ $;$ }\ell = 0,\dots,n(1 +\delta)\right\}
\quad {\rm of}\quad
[0,t_{N + 1}(\Delta)].
\end{displaymath}
So, for every $i\in\{1,\dots,N\}$, the process $X^i$ defined by (\ref{long_time_observation_copies_fractional_model}) is computed along $\{\frac{\ell T}{n}\textrm{ $;$ }\ell = 0,\dots,n\}$:
\begin{displaymath}
X_{\frac{\ell T}{n}}^{i} =
X_{t_i(\Delta) +\frac{\ell T}{n}} - X_{t_i(\Delta)}
\textrm{ $;$ }\ell\in\{0,\dots,n\}.
\end{displaymath}
Again, $b_0$ is estimated by $\widehat b =\overline X$. Figure \ref{plots_FBM} shows the true function $b_0$ (red line) and 5 estimations obtained thanks to $\widehat b$ (dashed black lines) for two values of $H$ ($0.6$ and $0.9$) and a double forgetting period ($\delta = 2$).
\begin{figure}[h!]
\begin{tabular}{cc}
 \includegraphics[width=0.4\textwidth]{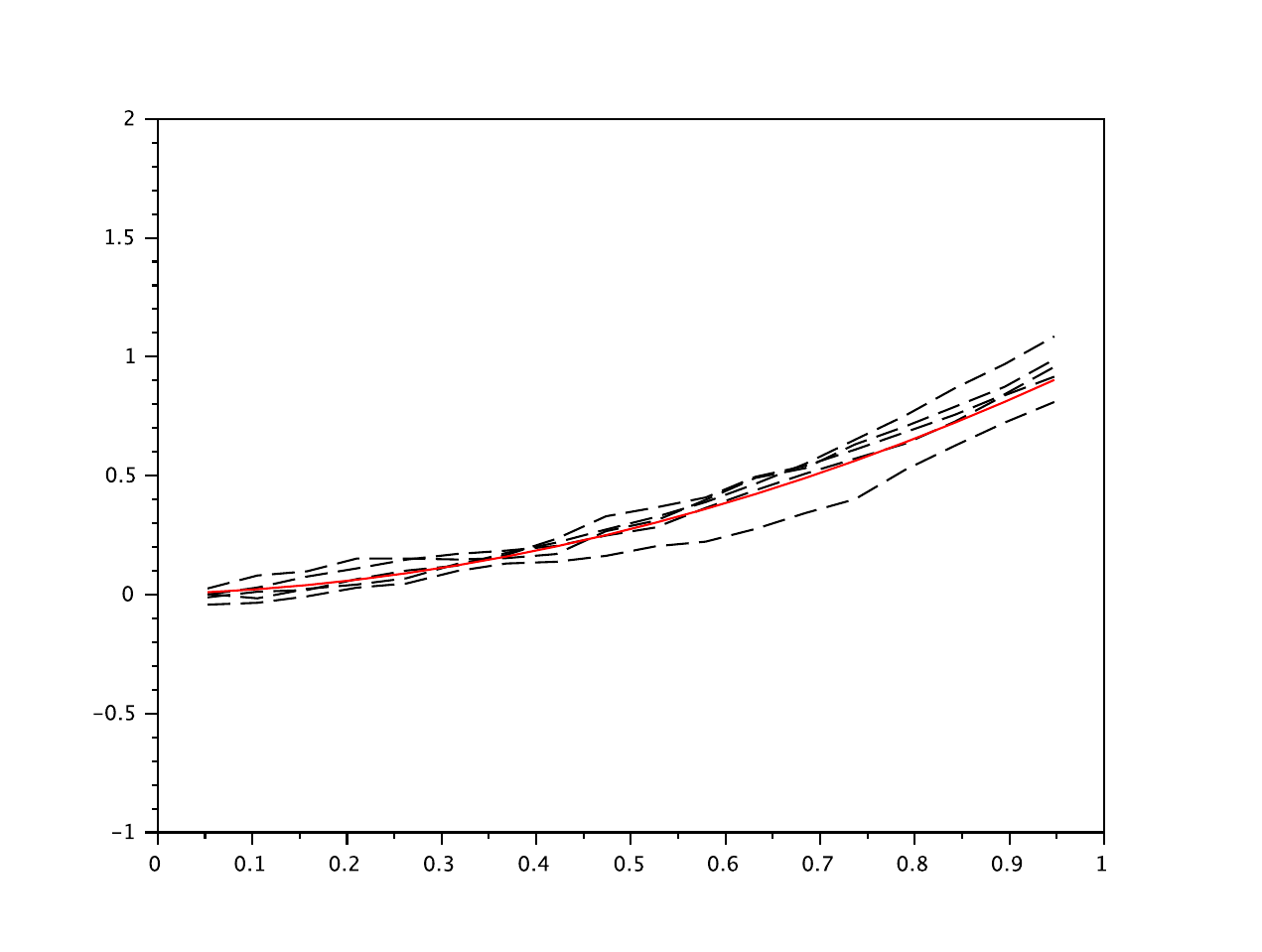} &
 \includegraphics[width=0.4\textwidth]{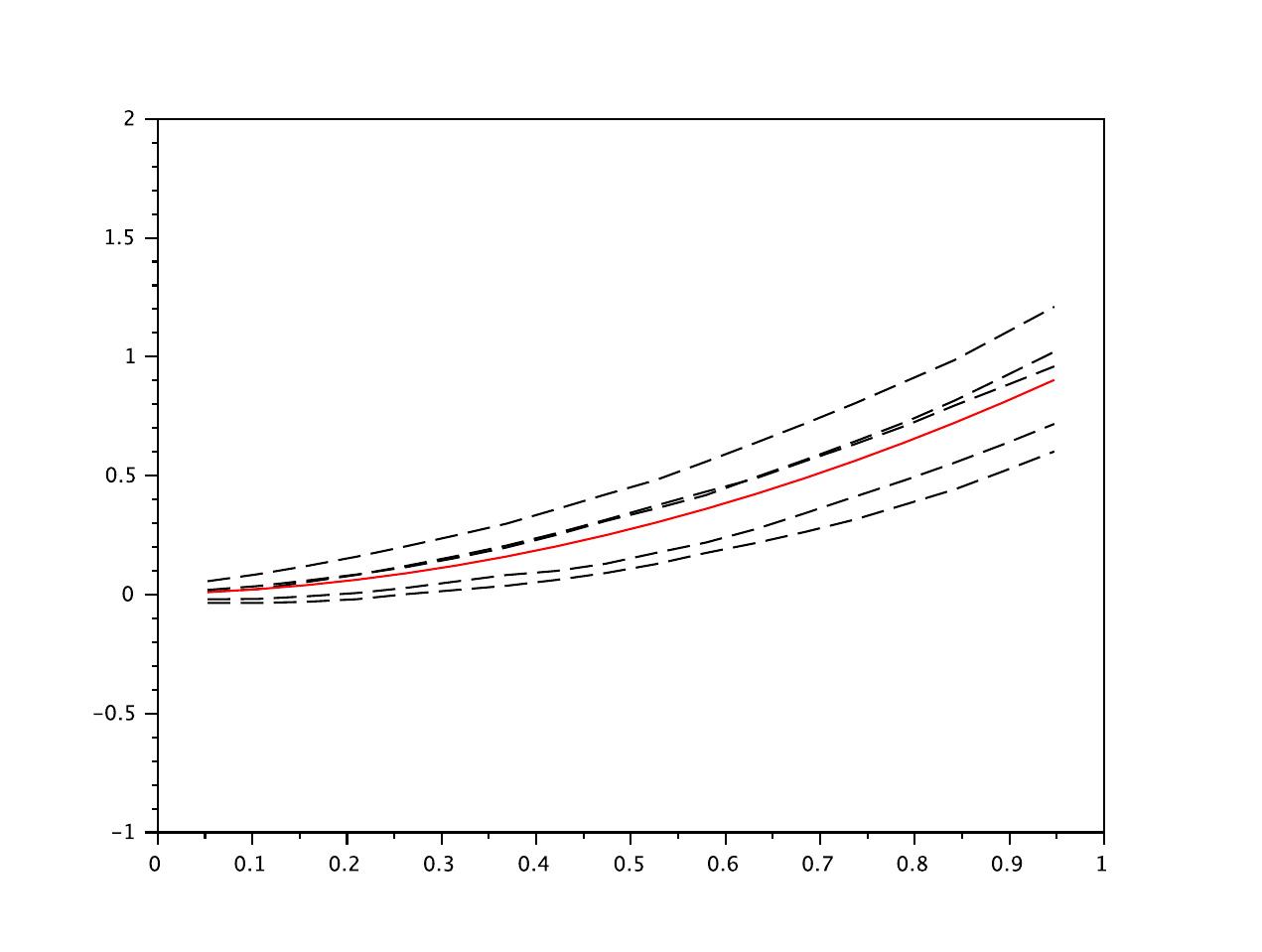}
\end{tabular}
\caption{Plots of 5 estimations (dashed black) of the true function $b_0$ (red) in Model (\ref{long_time_observation_copies_fractional_model}) for $\delta = 2$ and $H = 0.6,0.9$ ($N = 50$ copies).}
\label{plots_FBM} 
\end{figure}
The experiment is repeated $100$ times for $\delta = 2$, but also for a single forgetting period ($\delta = 1$), and the means and standard deviations of ${\rm ISE}(\widehat b)$ are stored in Table \ref{table_MISE_FBM}.
\begin{table}[h!]
\begin{center}
\begin{tabular}{|l||c|c|}
 \hline
  & $\delta = 1$ & $\delta = 2$\\
 \hline
 \hline
 $H = 0.6$ & $5\cdot 10^{-3}$ ($5.9\cdot 10^{-3}$) & $4.3\cdot 10^{-3}$ ($3.5\cdot 10^{-3}$)\\
 \hline
 $H = 0.9$ & $2.2\cdot 10^{-2}$ ($4.1\cdot 10^{-2}$) & $1.9\cdot 10^{-2}$ ($1.7\cdot 10^{-2}$)\\
 \hline
\end{tabular}
\medskip
\caption{Mean and StD. (in parentheses) of ${\rm ISE}(\widehat b)$ for $\delta = 1,2$ and $H = 0.6,0.9$ (100 repetitions).}\label{table_MISE_FBM}
\end{center}
\end{table}
\newline
Both Figure \ref{plots_FBM} and the columns of Table \ref{table_MISE_FBM} show that for a fixed value of $\delta$, the mean and standard deviation of ${\rm ISE}(\widehat b)$ are small but degrade when $H$ increases. Moreover, the rows of Table \ref{table_MISE_FBM} show that for a fixed value of $H$, the mean and standard deviation of ${\rm ISE}(\widehat b)$ improve when $\delta$ increases. These numerical findings were expected from Corollary \ref{risk_bound_estimator_b_0_long_time_drifted_fBm}.
%

% Subsection : Estimation via \widehat b'.

%
\subsection{Estimation in Model (\ref{linear_SDE_correlated_noises}) via $\widehat b'$}\label{subsection_numerical_experiments_b_0'}
As in the first part of Section \ref{subsection_numerical_experiments_b_0}, $N = 100$ independent Brownian motions $W^1,\dots,W^N$ are simulated along the dissection $\{\frac{\ell T}{n}\textrm{ $;$ }\ell = 0,\dots,n\}$ of $[0,T]$ with $T = 1$ and $n = 150$. Consider the $N$-dimensional process $\mathbf Z = (Z^1,\dots,Z^N)$ such that
\begin{displaymath}
\frac{1}{\sigma}\mathbf Z_t =
\Gamma_{\star}^{\frac{1}{2}}\mathbf W_t
\textrm{ $;$ }
\forall t\in [0,T],
\end{displaymath}
where $\sigma = 0.5$, $\mathbf W = (W^1,\dots,W^N)$, and $\Gamma_{\star} = (\gamma^{|i - k|})_{i,k}$ with $\gamma\in (0,1)$. Clearly, $Z^1,\dots,Z^N$ are $N$ correlated Brownian motions satisfying the condition (\ref{noise_covariance_condition_Gaussian_2}), whose correlation increases with $\gamma$. For any $i\in\{1,\dots,N\}$, the solution $S^i$ of Equation (\ref{linear_SDE_correlated_noises}) is simulated thanks to (\ref{linear_SDE_correlated_noises_transformed}) with $X^i = b_0 + Z^i$, where
\begin{displaymath}
b_0(t) =\frac{t^2}{2} -\frac{\sigma^2t}{2}
\textrm{ $;$ }\forall t\in [0,T].
\end{displaymath}
Then, the drift function $\texttt b_0 = \frac{\sigma^2}{2} + b_0' = {\rm Id}_{[0,T]}$ in Equation (\ref{linear_SDE_correlated_noises}) is estimated by
\begin{displaymath}
\widehat{\tt b} =
\frac{\sigma^2}{2} +\widehat b' =
\frac{\sigma^2}{2} +\sum_{j = 1}^{\widehat m}\mathcal I(\varphi_j,\overline X)\varphi_j,
\end{displaymath}
where $(\varphi_1,\dots,\varphi_N)$ is the $[0,T]$-supported trigonometric basis, and $\widehat m$ is selected in $\{2,\dots,12\}$ via (\ref{model_selection_criterion}). After preliminary simulations, the constant $\mathfrak c_{\rm cal}$ involved in the model selection criterion (\ref{model_selection_criterion}) is chosen as follow: $\mathfrak c_{\rm cal} = 5$.
\\
\\
Figure \ref{plots_BS} shows the true function $\texttt b_0$ (red line) and 5 adaptive estimations obtained thanks to $\widehat{\tt b}$ (dashed black lines) for three values of $\gamma$ ($0$, $0.5$ and $0.75$).
\begin{figure}[h!]
\begin{tabular}{ccc}
 \includegraphics[width=0.3\textwidth]{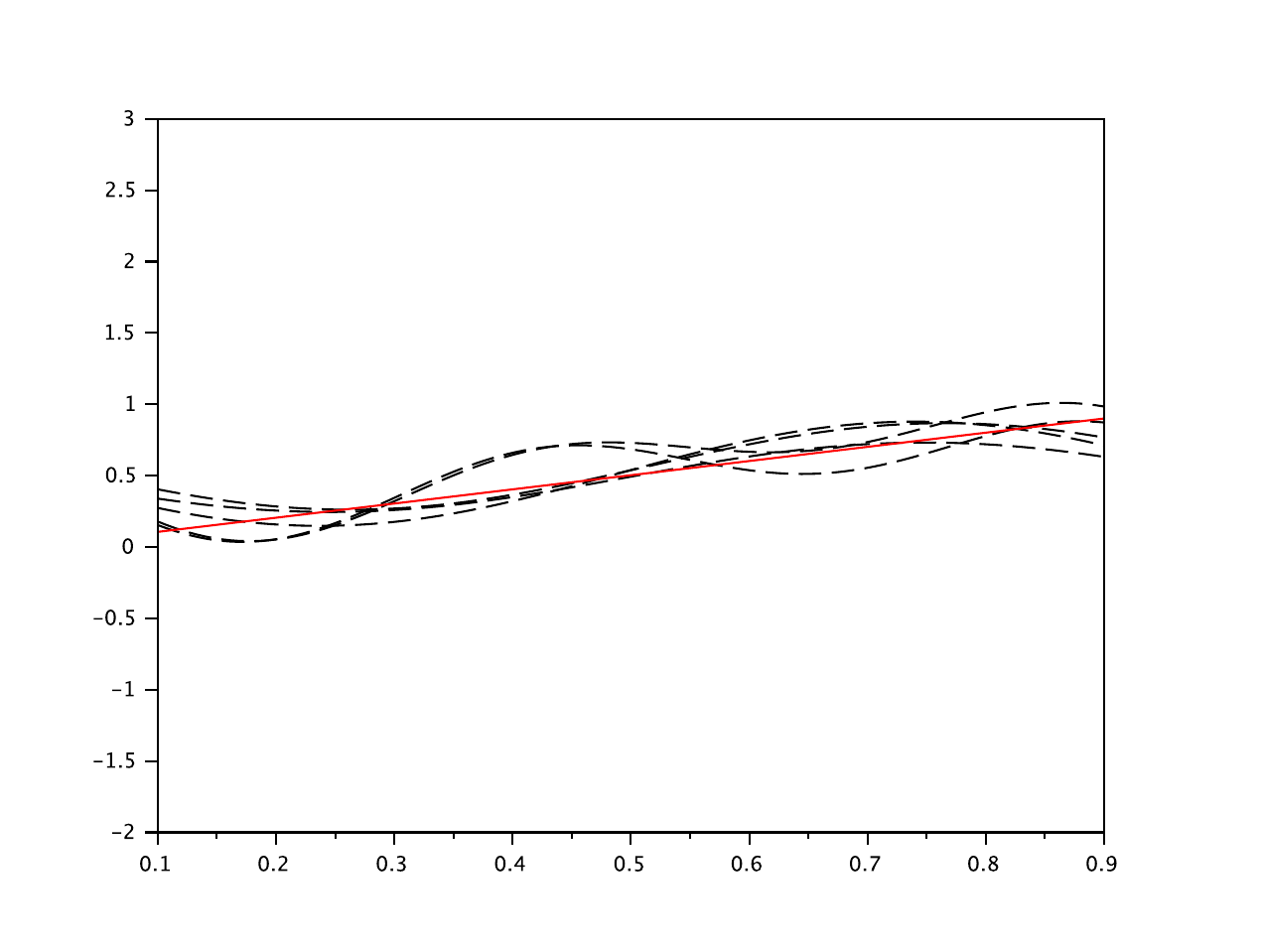} &
 \includegraphics[width=0.3\textwidth]{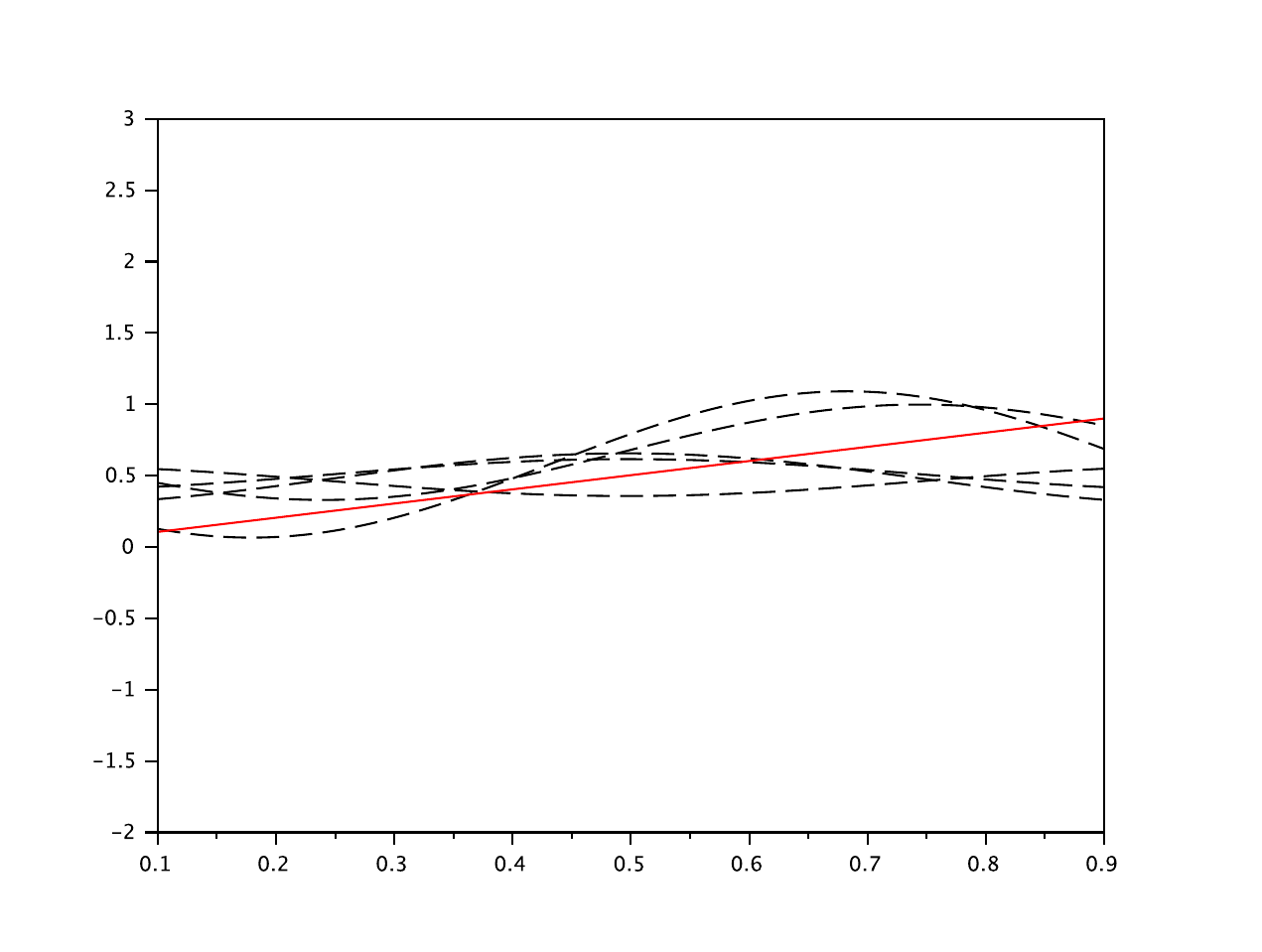} &
 \includegraphics[width=0.3\textwidth]{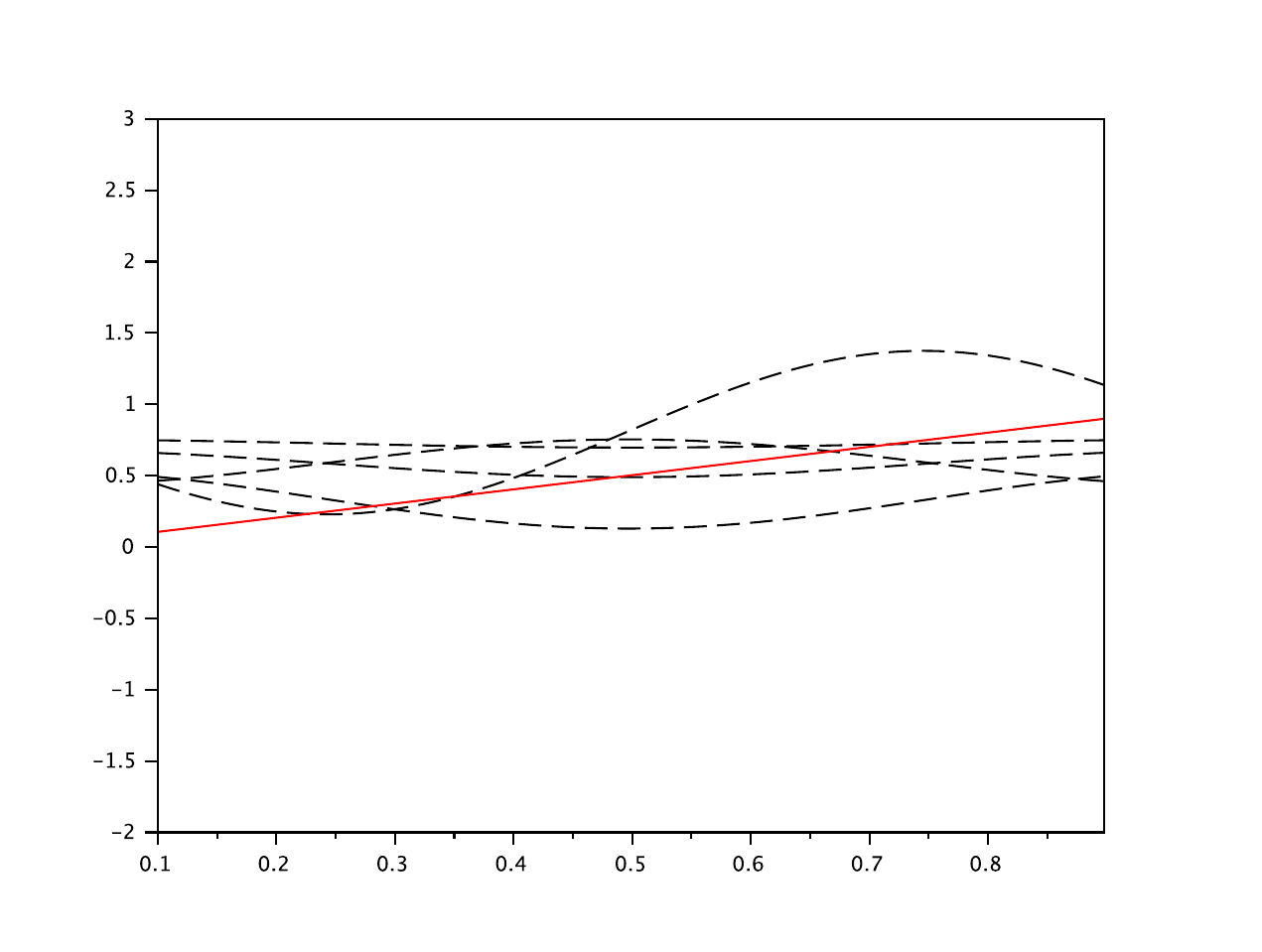}
\end{tabular}
\caption{Plots of 5 adaptive estimations (dashed black) of the true function $\texttt b_0$ (red) in Equation (\ref{linear_SDE_correlated_noises}) for $\gamma = 0,0.5,0.75$ ($N = 100$ copies).}
\label{plots_BS} 
\end{figure}
The experiment is repeated $100$ times, and the means and standard deviations of ${\rm ISE}(\widehat{\tt b})$ and $\widehat m$ are stored in Table \ref{table_MISE_BS}.
\begin{table}[h!]
\begin{center}
\begin{tabular}{|l||c|c|c|}
 \hline
 $\gamma$ & $0$ & $0.5$ & $0.75$\\
 \hline
 \hline
 Mean ${\rm ISE}(\widehat{\tt b})$ & $1.1\cdot 10^{-2}$ & $4.3\cdot 10^{-2}$ & $8.1\cdot 10^{-2}$\\
 \hline
 StD. ${\rm ISE}(\widehat{\tt b})$ & $0.2\cdot 10^{-2}$ & $2.7\cdot 10^{-2}$ & $2.7\cdot 10^{-2}$\\
 \hline
 \hline
 Mean $\widehat m$ & $3.4$ & $2.4$ & $2.2$\\
 \hline
 StD. $\widehat m$ & $0.894$ & $0.548$ & $0.447$\\
 \hline
\end{tabular}
\medskip
\caption{Means and StD. of ${\rm ISE}(\widehat{\tt b})$ and $\widehat m$ (100 repetitions).}\label{table_MISE_BS}
\end{center}
\end{table}
\newline
Both Figure \ref{plots_BS} and Table \ref{table_MISE_BS} show that the mean and standard deviation of ${\rm ISE}(\widehat{\tt b})$ are small (of order $10^{-2}$) when $\gamma = 0$ (i.e. $Z^1,\dots,Z^N$ are independent), but degrade when $\gamma$ increases, which was expected from Theorem \ref{risk_bound_adaptive_estimator_b_0'_Gaussian}. Moreover, for all considered values of $\gamma$, the standard deviation of $\widehat m$ is lower than $1$ (see Table \ref{table_MISE_BS}), meaning that our model selection procedure (\ref{model_selection_criterion}) is quite stable.
%

% References.

%

%
\end{document}